\documentclass[preprint,11pt]{elsarticle}

\newtheorem{theorem}{Theorem}[section]
\newtheorem{lemma}{Lemma}[section]

\newtheorem{corollary}{Corollary}[section]

\newcommand{\ignore}[1]{}{}

\usepackage{amssymb}
\usepackage{amsmath}

\usepackage{color}
\usepackage{soul}
\usepackage[dvipsnames]{xcolor}

\usepackage[colorlinks=true, linkcolor=blue, citecolor=blue]{hyperref}

\def\1{{{\mbox{${\rm{1\negthinspace\negthinspace I}}$}}}}

\newcommand\beq{\begin{equation}}
\newcommand\eeq{\end{equation}}

\usepackage{ifthen}
\usepackage{xkeyval}
\usepackage{todonotes}
\begin{document}

\begin{frontmatter}

\title{Cram\'{e}r-type moderate deviations and precise large deviations for the elephant random walk with random step sizes}
\author{Xiequan Fan, \ \   Shuyuan Liang \ \   and\ \ Dongxue Ma   }
\address{School of Mathematics and Statistics, Northeastern University at Qinhuangdao, Qinhuangdao, China.\\
E-mail: fanxiequan@hotmail.com.}

\begin{abstract}
In this paper, we   focuses on how the distribution of random step sizes affects Cram\'{e}r-type moderate deviations and precise large deviations for the elephant random walk with random step sizes. First, with   sub-exponential moment condition of random step sizes, we establish Cram\'{e}r-type moderate deviation results. Consequently,  moderate deviation principles are obtained. Secondly, for random step sizes with sub-exponential tails, we establish a large deviation principle  on tail probabilities. Finally, for random step sizes with   regularly varying tails, we obtain precise large deviations.
\end{abstract}

\begin{keyword} Elephant random walk, Cram\'{e}r-type moderate deviations, Large deviation principles, Precise large deviations, Regularly varying tails
\vspace{0.3cm}
\MSC primary 60G42;  60F10;   secondary  60G50
\end{keyword}

\end{frontmatter}




\section{Introduction}

The elephant random walk (ERW) is a random walk model defined on the one-dimensional integer lattice $\mathbb{Z}$, first introduced by Sch\"{u}tz and Trimper \cite{schutz2004elephants} in 2004. In contrast to standard random walks, where each step is independent or depends solely on the current state, the next move of the ERW is determined by randomly selecting an earlier step from its entire history. This model was originally proposed to investigate memory effects in random walk processes. It is known that the ERW undergoes a phase transition between diffusive and superdiffusive regimes, governed by the memory parameter $p$.  It is also known that ERW is a non-Markovian random walk, provided that $p\neq 1/2.$

The ERW model can be described as follows.   At time $n = 0$, we set  the walk at original position $T_0 = 0$. When $n\geq1$, the position of the walk at time $n$ is
\begin{equation*}
T_n=T_{n-1}+X_n,
\end{equation*}
where $X_n$ is step of the walk at time $n$. The distributions of steps $(X_n)_{n\geq1}$ are defined as follows.
Let $ p \in [0,1] $ and $ q \in(0,1) $ be two constants. For $n=1$, it holds
\begin{equation*}
\begin{cases}
\mathbf{P}(X_1=1)=q,\\
\mathbf{P}(X_1=-1)=1-q.
\end{cases}
\end{equation*}
Throughout the paper,  we set $q=1/2$ such that $\mathbf{E}X_1=0$, which means the walk is unbiased. At time $ n \geq 2 $, the value of $X_n$ is determined by the following rule. We select an integer uniformly at random from the set  $ \{1, 2, \ldots, n-1\}$,
  and denote it by $ \beta_{n} $. After that, we define the distributions of $(X_n)_{n\geq2}$ as follows:
\[
\begin{cases}
\mathbf{P}( X_{n} = X_{\beta_{n}} \mid X_1, X_2, \ldots, X_{n-1} ) = p, \\
\mathbf{P}( X_{n} = -X_{\beta_{n}} \mid X_1, X_2, \ldots, X_{n-1} ) = 1 - p.
\end{cases}
\]
Thus, the position $T_n$ of the walk at time $ n $ can be rewritten as follows
\begin{equation} \label{ERW}
T_n = \sum_{k=1}^n X_k,
\end{equation}
where $X_{n} = \alpha_{n} X_{\beta_{n}}$ such that  $\alpha_n$ has a Rademacher distribution $ \mathcal{R}(p) $ for all $n\geq2$, and that $ \alpha_n $ is independent of the random variables $ (X_k)_{1\leq k\leq n} $ and $ (\beta_k)_{2\leq k\leq n} $. Moreover, $ (\beta_n)_{n\geq2} $ is also a sequence of independent random variables. By the pioneering work of Sch\"{u}tz and Trimper \cite{schutz2004elephants}, it is known that the ERW is called diffusive, critical and superdiffusive according to $ p \in [0, 3/4)$, $ p = 3/4$ and $ p \in (3/4, 1]$, respectively.

The ERW has recently attracted a lot of attention.  Baur and Bertoin \cite{ERWPTU} have obtained  functional limit theorems for the position $(T_n)_{n\geq1}$ properly normalized. Coletti, Gava and Sch{\"u}tz \cite{CGSCLT,ASIP} have established the central limit theorem (CLT), strong invariance principle and a law of large numbers.  Bercu \cite{BMERW} also obtained the CLT for the ERW with $p \in (0, 3/4]$  via a martingale method. Bertoin \cite{B21}
studied  how memory impacted passages at the origin for the ERW.
Moreover, variations of the ERW also received considerable attention. Bercu and Laulin \cite{BLMERW} introduced the multi-dimensional ERW and  investigated the multivariate CLT.  Laulin \cite{LLAERW} studied the asymptotic analysis of the amnesic ERW. If the ERW has only a restricted memory, Gut and Stadtm{\"u}ller \cite{GSVERW} considered its asymptotic behaviors. Qin \cite{QS} investigated the recurrence and transience of the multidimensional ERW. Nakano \cite{N} introduced ERW with polynomially decaying steps.  Recently, Bercu \cite{BMERWS} obtained the multivariate CLT for the multi-dimensional ERW with stops.

 In this paper, we focus on the ERW with random step sizes. ERW with random step sizes is a generalization of the ERW,
and it was first introduced by Fan and Shao \cite{FSCMD}. In this model, the step sizes of walk are allowed to vary over time. The model can be described as follows. Let $(Z_k)_{k\geq1}$ be a sequence of i.i.d.\ random variables with mathematical expectation $\mathbf{E}Z_1=\nu$,   $\nu\neq0$, and finite variance $\operatorname{Var}(Z_1)=\sigma^2>0$. Assume that $(Z_k)_{k\geq1}$ is independent of $(X_k)_{k\geq1}$. Let $Y_1$ be the position of the ERW with random step sizes at time $n=1$ such that
\begin{equation*}
 Y_1=X_1Z_1.
\end{equation*}
The position of the ERW with random step sizes  at time $n$, instead of \eqref{ERW}, is
\begin{equation*}
S_n=\sum_{k=1}^{n}Y_k,
\end{equation*}
where
\begin{equation*}
Y_{n}=\alpha_nX_{\beta_n}Z_n,\quad n\geq2.
\end{equation*}
 Without loss of generality, we set $\nu=1$. Otherwise we can replace $S_n$ by $S_n/\nu$. When $\sigma=0$, the ERW with random step sizes degenerates to the usual ERW. The ERW with random step sizes has also attracted a lot of interest. According to the different distributions of $Z_1$, the ERW with random step sizes shows different asymptotic properties. Under various metrics, Dedecker \textit{et al.} \cite{DFHRCCLT} have established the optimal rates of convergence in the CLT,   with step sizes having finite moments of order larger than $2$. With various moment conditions, Hu and Fan \cite{HFDI} gave some deviation inequalities for the ERW with random step sizes. With Cram\'{e}r's condition, Fan and Shao \cite{FSCMD} obtained Cram\'{e}r-type moderate deviations for the ERW with random step sizes.

In this paper, we investigate the Cram\'{e}r-type moderate deviations and precise large deviations for the ERW with random step sizes without Cram\'{e}r's conditions. When the random step sizes satisfy a sub-exponential moment condition, the Cram\'{e}r-type moderate deviation results for martingales  cannot be directly applied. To address this issue, we introduce a truncation argument for random step sizes in this work. For the bounded component, we employ the  Cram\'{e}r-type moderate deviations for martingales from Fan and Shao \cite{FSCMD}. As for the unbounded component, we prove its tail probability is negligible. Combining the analyses of these two components, we finally derive the desired Cram\'{e}r-type moderate deviation results.  Consequently, we obtain the moderate deviation principles. In case where the step sizes have sub-exponential tails, we get the large deviation principles for the tail probability of $S_n$.  For random step sizes with regularly varying tails, we establish the  precise large deviations, also known as Nagaev type large deviations, for tail probabilities of $S_n$.

 The paper is organized as follows.  In Section \ref{sec2}, we present our main results, including Cram\'{e}r-type moderate deviations,  moderate and large deviation principles  and precise large deviations. In Section \ref{lemms2},
 we collect some preliminary lemmas used in the proofs of our main results.   In Sections \ref{bscsd} - \ref{bscsd6},
we give the proofs of our main results.

Throughout the paper,  denote  by $c $ and $C$,    probably supplied with some indices,  generic positive constants that do not depend on $n.$   Moreover, its value  may vary from line to line. For two positive sequences $(a_n)_{n\geq 1}$ and $(b_n)_{n\geq 1}$, we adopt the notation  $a_n\gg b_n $, which means $a_n/b_n\rightarrow\infty$ as $n\rightarrow\infty$.
We also adopt $a_n   \asymp  b_n $, provided that there exists a positive  constant $c$ such that $ 1/c \leq a_n/b_n \leq c$ holds for all $n$ large enough. For two positive functions $f(x)$ and $g(x),$ we also make use of the notation $f(x)\sim g(x)$, which means $f(x)/g(x)\rightarrow 1$ as $x\rightarrow\infty$.

\section{Main results}\label{sec2}
\setcounter{equation}{0}

For convenience, for all $n\geq1$ and all $p\in (0, 1]$, we denote
\[
\ \ a_n=\frac{\Gamma(n)\Gamma(2p)}{\Gamma(n+2p-1)} \ \ \ \  \    \textrm{and} \ \   \ \ \    v_n=\sum_{k=1}^n a_k^2.
\]
Clearly, it holds $a_1=1$.

We can deduce the following Cram\'{e}r-type moderate deviations for ERW with random step sizes, provided that random step sizes $(Z_k)_{k\geq1}$ satisfy the  sub-exponential moment condition. Denote 
by $\Phi (x)$ the standard normal distribution function.
\begin{theorem}\label{thm1}
Assume that there exist two constants $0<\alpha<1, c>0 $ such that
\begin{equation}\label{wda}
\mathbf{E} [(Z_1-1)^2\exp{\{c|Z_1-1|^\alpha\}}]<\infty.
\end{equation}
Assume that $Q_n=   n^{\beta} ,$ $\beta \in (0, 1/2].$ Then
the following inequalities hold for $n$ large enough.
\begin{description}
  \item[\textbf{[i]}]  If $p \in (0,  1/2]$,  then  for all $0\leq x =o(\min{\{\sqrt{n}/Q_n, Q_{n}^{\alpha/2}\}}),$
	\begin{eqnarray}\label{fth111}
  \Bigg| \ln \frac{\mathbf{P}\Big(a_n S_n \geq x\sqrt{v_n  + na_n^2 \sigma^2 }\, \Big)}{1-\Phi \left( x\right)} \Bigg|   \leq   c  \Bigg(x^3\frac{Q_n}{\sqrt{n}}   +  (1+x) \frac{Q_n \ln Q_n }{\sqrt{n}}    \Bigg) .
\end{eqnarray}
  \item[\textbf{[ii]}] If $p \in (\textcolor{black}{1/2}, 3/4)$, then  for all $0\leq x  =o( \textcolor{black}{\min\{n^{(3-4p)/2}/Q_n,\  Q_{n}^{\alpha/2}\}} ),$
  \begin{equation}\label{fth2}
\Bigg| \ln\frac{\mathbf{P}\Big(a_n S_n \geq x\sqrt{v_n  + na_n^2 \sigma^2 }\, \Big)}{1-\Phi \left( x\right)} \Bigg| \leq     c \Bigg(x^3\frac{ Q_n}{    n^{ (3-4p) /2 } }  +  (1+x) \frac{ Q_n  \ln Q_n }{   n^{ (3-4p) /2 } } \Bigg)  .
\end{equation}
  \item[\textbf{[iii]}] If $ p=3/4$,  then   for all $0\leq x =o(\sqrt{\ln n}),$
	\begin{equation}\label{fth3}
\Bigg| \ln \frac{\mathbf{P}\Big(a_n S_n \geq x\sqrt{v_n  + na_n^2 \sigma^2 }\, \Big)}{1-\Phi \left( x\right)}  \Bigg| \leq   c \Bigg(\frac{x^3}{\sqrt{ \ln n}}  +  (1+x) \frac{ \ln \ln n  }{\sqrt{\ln n}}  \Bigg)  .
\end{equation}
\end{description}
Moreover, when $a_nS_n$ is replaced by $-a_nS_n$, the  inequalities \eqref{fth111}, \eqref{fth2} and \eqref{fth3} hold also.
\end{theorem}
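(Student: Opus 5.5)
The plan is to follow the truncation strategy described in the introduction. Write $Z_k-1=\xi_k$, so that $Y_k=\alpha_kX_{\beta_k}+\alpha_kX_{\beta_k}\xi_k$ and
\[
a_nS_n=a_nT_n+a_n\sum_{k=1}^n \epsilon_k\xi_k ,
\]
where $\epsilon_k=\alpha_kX_{\beta_k}\in\{-1,1\}$ for $k\ge 2$ and $\epsilon_1=X_1$.

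\textbf{Martingale structure and truncation.} The usual ERW martingale $M_n=a_nT_n=\sum_{k}a_k\varepsilon_k$ has bounded differences $|a_k\varepsilon_k|\le Ca_k$. Since $\xi_k$ is independent of $\epsilon_k$ and of the past and has mean zero, the sum $\sum_k a_n\epsilon_k\xi_k$ is a martingale with respect to the natural filtration. A cleaner choice is to build a single martingale $\mathcal{M}_n=\sum_{k\le n}\eta_k$ whose differences combine $a_k\varepsilon_k$ and $a_n\epsilon_k\xi_k$. I would actually use the filtration $\mathcal{F}_k=\sigma(X_j,Z_j,j\le k)$, noting that $\epsilon_k\xi_k$ and $\varepsilon_k$ are martingale differences there.

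Truncate at level $Q_n$: set $\xi_k'=\xi_k\1_{\{|\xi_k|\le Q_n\}}-\mathbf{E}\xi_k\1_{\{|\xi_k|\le Q_n\}}$ and $\xi_k''=\xi_k-\xi_k'$. The bounded part gives a martingale with differences bounded by $C a_n Q_n$ (after normalization by $\sqrt{v_n+na_n^2\sigma^2}$, bounded by $\epsilon_n\asymp Q_n/\sqrt n$ in case [i], $Q_n/n^{(3-4p)/2}$ in case [ii], and $1/\sqrt{\ln n}$ in case [iii], using $a_n\asymp n^{1-2p}$ and the known asymptotics of $v_n$). Its conditional variance must be close to 1. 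The $\xi'$ part has conditional variance exactly $\mathbf{E}\xi_1'^2$ per step, since $\epsilon_k^2=1$. The ERW part's conditional variance is $a_k^2(1-(2p-1)^2(T_{k-1}/(k-1))^2)$. The cross term vanishes because $\mathbf{E}\xi'=0$ and is independent. The variance deviation $\|\langle\mathcal M\rangle_n-1\|$ is then controlled exactly as in Fan and Shao's ERW analysis, with an extra $\mathbf{E}\xi_1^2\1_{\{|\xi_1|>Q_n\}}\le C e^{-cQ_n^\alpha}$ term from \eqref{wda}. I would then invoke the Cramér-type moderate deviation for martingales with bounded differences from \cite{FSCMD}, which yields the error $c(x^3\epsilon_n+(1+x)\delta_n\ln\ldots)$ stated in \eqref{fth111}--\eqref{fth3}. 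The $\ln Q_n$ factor comes from the $\epsilon_n|\ln\epsilon_n|$ term of that result.

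\textbf{Negligibility of the unbounded part.} Use
\[
\mathbf{P}(\mathcal{M}'_n\ge x+y)-\mathbf{P}(|R_n|\ge y)\le\mathbf{P}(a_nS_n\ge x\sqrt{\cdot})\le \mathbf{P}(\mathcal{M}'_n\ge x-y)+\mathbf{P}(|R_n|\ge y),
\]
where $R_n$ is the normalized $\xi''$-sum. Choose $y=\epsilon_n$-scale, say $y=\epsilon_n(1+x)^{-1}$-ish, so that $1-\Phi(x\pm y)$ differs from $1-\Phi(x)$ by a factor $\exp\{O((1+x)y)\}$ absorbed into the bound. Bound $\mathbf{P}(|R_n|\ge y)\le \mathbf{P}(\max_k|\xi_k|>Q_n)+\mathbf{P}(|\text{centering}|\ge y)$. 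The first term is at most $n\,\mathbf{P}(|\xi_1|>Q_n)\le Cn e^{-cQ_n^\alpha}$ by Markov's inequality with \eqref{wda}. The centering is deterministic times a $\pm1$ sum with mean $O(e^{-cQ_n^\alpha})$, and is negligible. We need $ne^{-cQ_n^\alpha}=o\big((1-\Phi(x))\epsilon_n\big)$, i.e. $x^2\ll Q_n^\alpha$. This is exactly where the constraint $x=o(Q_n^{\alpha/2})$ enters, along with $Q_n=n^\beta$, so that $\ln n\ll Q_n^\alpha$. In case [iii] one takes $Q_n$ a power of $\ln n$ or any polynomial; then $x=o(\sqrt{\ln n})$ is automatically within range, and the errors from truncation are dominated by $1/\sqrt{\ln n}$. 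The symmetric statement for $-a_nS_n$ follows by applying the same argument to $-\mathcal{M}_n$.

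\textbf{Main obstacle.} I expect the main obstacle to be verifying the conditional-variance condition of the martingale moderate deviation theorem in the required exponential-tail form, i.e. $\mathbf{P}(|\langle\mathcal M\rangle_n-1|\ge \delta_n^2)$ being small relative to $1-\Phi(x)$. This requires concentration for $T_{k-1}/(k-1)$ in the diffusive regime; I would import the corresponding estimate from \cite{FSCMD}, since the $\xi'$ contribution is deterministic.
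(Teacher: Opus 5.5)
Your proposal is correct and follows essentially the same route as the paper. You truncate $Z_k-1$ at $Q_n$ and apply the Fan--Shao martingale moderate deviation to the centered bounded part, importing the conditional-variance concentration from Lemma 7.2 of \cite{FSCMD}. You absorb the deterministic centering by a small shift of $x$, and you dispose of the event $\{\max_k|Z_k-1|>Q_n\}$ via $n\mathbf{P}(|Z_1-1|>Q_n)\le Cne^{-cQ_n^\alpha}=o(1-\Phi(x))$, which is exactly where $x=o(Q_n^{\alpha/2})$ enters. The only differences are cosmetic: you use an interleaved natural filtration where the paper puts the ERW steps first and then the $Z$'s, and you fold the $\check\sigma_n^2$ versus $\sigma^2$ discrepancy into condition (A2) instead of doing a separate rescaling via Lemma~\ref{sigma2}.
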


When $\alpha=1,$   Cram\'{e}r-type moderate deviations for ERW with random step sizes have been established by Fan and Shao \cite{FSCMD}.
In comparison with the result of Fan and Shao \cite{FSCMD}, the admissible ranges of Cram\'{e}r-type moderate deviations established in Theorem \ref{thm1} shrink considerably as a consequence of condition \eqref{wda}.

From points [i] and [ii] of Theorem \ref{thm1},  we obtain the following result about the equivalence to the normal
tail.
\begin{corollary}\label{cor1.1}
Assume the conditions of Theorem \ref{thm1} are satisfied.
The following equalities hold
\begin{equation}\label{o1}
\frac{\mathbf{P}\Bigl( a_n S_n \geq x \sqrt{ v_n + n a_n^2 \sigma^2 } \Bigr)}{1 - \Phi(x)}
= 1 + o(1)
\end{equation}
and
\begin{equation}\label{o2}
\frac{\mathbf{P}\Bigl( a_n S_n \leq -x \sqrt{v_n + n a_n^2 \sigma^2 } \Bigr)}{\Phi(-x)}
= 1 + o(1)
\end{equation}
for any $p\in (0, 3/4)$ and all $0 \leq x = o\bigl(\min\bigl\{n^{\alpha/(6\alpha+4)}, \textcolor{black}{n^{(3-4p)\alpha/(6\alpha+4)}}\bigr\}\bigr)$ as $n\rightarrow \infty$.
\end{corollary}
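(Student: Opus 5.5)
The plan is to derive the corollary from Theorem \ref{thm1} [i] and [ii] by choosing $Q_n=n^{\beta}$ so that the right-hand sides of \eqref{fth111} and \eqref{fth2} tend to zero over the largest possible range of $x$. Since $|\ln t|\to 0$ is equivalent to $t\to 1$, this gives \eqref{o1}. Equality \eqref{o2} follows in the same way from the symmetric version for $-a_nS_n$ stated at the end of Theorem \ref{thm1}, together with $\Phi(-x)=1-\Phi(x)$.

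First take $p\in(0,1/2]$ and write the right-hand side of \eqref{fth111} as $c(x^3 n^{\beta-1/2}+(1+x)n^{\beta-1/2}\beta\ln n)$. The admissible range $x=o(\min\{n^{1/2-\beta},n^{\alpha\beta/2}\})$ and the vanishing of $x^3n^{\beta-1/2}$ (that is, $x=o(n^{(1/2-\beta)/3})$) must be balanced. Equating $(1/2-\beta)/3=\alpha\beta/2$ gives $\beta=1/(3\alpha+2)$, hence $\alpha\beta/2=\alpha/(6\alpha+4)$. With this $\beta$ we have $\beta<1/2$, so:
\begin{itemize}
\item if $x=o(n^{\alpha/(6\alpha+4)})$, then $x^3 n^{\beta-1/2}=o(1)$, because $3\alpha/(6\alpha+4)=1/2-\beta$;
\item the term $(1+x)n^{\beta-1/2}\ln n\le n^{\alpha/(6\alpha+4)+\beta-1/2}\ln n$ carries a negative power of $n$, since $\alpha/(6\alpha+4)<1/2-\beta=3\alpha/(6\alpha+4)$, so it tends to zero;
\item $x$ stays in the range of [i], since $n^{\alpha/(6\alpha+4)}\le n^{1/2-\beta}$.
\end{itemize}

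For $p\in(1/2,3/4)$, set $\gamma=(3-4p)/2\in(0,1/2)$ and repeat the computation with $n^{1/2}$ replaced by $n^{\gamma}$. Balancing $(\gamma-\beta)/3=\alpha\beta/2$ gives $\beta=2\gamma/(3\alpha+2)$ and threshold $\alpha\gamma/(3\alpha+2)=(3-4p)\alpha/(6\alpha+4)$. Here $\beta\le\gamma<1/2$, so $\beta\in(0,1/2]$ as Theorem \ref{thm1} requires. The same three checks go through.

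Combining the two cases, for each $p<3/4$ the relevant threshold is at least $\min\{n^{\alpha/(6\alpha+4)},n^{(3-4p)\alpha/(6\alpha+4)}\}$. For $p\le1/2$ the second entry is the larger one, and for $p>1/2$ it is the smaller one, so this minimum is always the correct exponent.

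The only delicate point, which is minor, is that "$o(\cdot)$ uniformly" must be read as: for every sequence $x_n$ in the stated range, the ratio tends to $1$. The bounds above are monotone in $x$, so the supremum over the range tends to zero and uniformity follows.
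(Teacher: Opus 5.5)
Your proposal is correct and follows the paper's proof. Take $\beta=1/(3\alpha+2)$ in part~[i] of Theorem~\ref{thm1}, so that $x^3Q_n/\sqrt{n}\to0$ exactly up to the truncation threshold $Q_n^{\alpha/2}=n^{\alpha/(6\alpha+4)}$, and obtain \eqref{o2} from the version for $-a_nS_n$. For $p\in(1/2,3/4)$ the paper only says the claim ``follows by \eqref{fth2}''; your explicit choice $\beta=(3-4p)/(3\alpha+2)$ and the accompanying checks fill in that step correctly.
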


By Theorem \ref{thm1}, we also have the following moderate deviation principles (MDP) for the ERW with random step sizes with the sub-exponential moment condition.
\begin{corollary}\label{col0}
Assume that the  condition \eqref{wda} is satisfied.
\begin{description}
\item[\textbf{[i]}] Let $(b_n)_{n\geq1}$ be a sequence of real numbers satisfying $b_n\to\infty$ and $b_n/n^{ \textcolor{black}{\alpha/(2\alpha+4)}}\to 0$ as $n\to\infty$. If $p\in(0,\, 1/2 ]$,  then it holds for any Borel set $ B\subset\mathbb{R}$,
    \begin{equation}\label{MDP1}
    \begin{aligned}
    -\inf_{x\in B^o}\frac{x^2}{2} &\leq\liminf_{n\to\infty}\frac{1}{b_n^2}\ln\mathbf{P}\left(\frac{a_nS_n}{b_n\sqrt{v_n+na_n^2\sigma^2}}\in B \right) \\
    &\leq\limsup_{n\to\infty}\frac{1}{b_n^2}\ln\mathbf{P}\left(\frac{a_nS_n}{b_n\sqrt{v_n+na_n^2\sigma^2}}\in B \right)\leq -\inf_{x\in \bar{B}}\frac{x^2}{2},
    \end{aligned}
    \end{equation}
where $B^o$ and $\bar{B}$ denote the interior and the closure of $B$, respectively.

\item[\textbf{[ii]}] Let $(b_n)_{n\geq1}$ be a sequence of real numbers satisfying  $b_n\to\infty$ and $b_n/n^{\textcolor{black}{(3-4p)\alpha/(2\alpha+4)}}\to 0$ as $n\to\infty$. If $p\in(1/2, \, 3/4)$, then \eqref{MDP1} holds.

\item[\textbf{[iii]}] Let $(b_n)_{n\geq1}$ be a sequence of real numbers satisfying  $b_n\to\infty$ and $b_n/\sqrt{\ln n} \to 0$ as $n\to\infty$.  If $p=3/4$, then \eqref{MDP1} holds.

\end{description}
\end{corollary}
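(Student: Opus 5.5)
We derive Corollary \ref{col0} from Theorem \ref{thm1} by the standard route from Cram\'{e}r-type moderate deviations to an MDP, optimising the free parameter $Q_n=n^{\beta}$ in each regime.

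\textbf{Step 1: choice of $Q_n$.} In case [i] the admissible range of \eqref{fth111} is $x=o(\min\{n^{1/2-\beta},n^{\beta\alpha/2}\})$. We need the right-hand side of \eqref{fth111} to be $o(x^2)$ uniformly for $x$ of order $b_n$. At $x\asymp b_n$ the bound is of order $b_n^3 n^{\beta-1/2}+b_n n^{\beta-1/2}\ln n$, so it suffices that $b_n n^{\beta-1/2}\to0$ and $b_n=o(n^{\beta\alpha/2})$. Balancing $1/2-\beta=\beta\alpha/2$ gives $\beta=1/(\alpha+2)$, and then both constraints become $b_n=o(n^{\alpha/(2\alpha+4)})$. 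This is exactly the hypothesis of [i], and it also puts $x\asymp b_n$ inside the range of Theorem \ref{thm1}. Case [ii] is identical with $1/2$ replaced by $(3-4p)/2$: take $\beta=(3-4p)/(\alpha+2)$, which lies in $(0,1/2]$ and gives the range $b_n=o(n^{(3-4p)\alpha/(2\alpha+4)})$. In case [iii] we use \eqref{fth3} directly. Its error is $o(x^2)$ for $x\asymp b_n=o(\sqrt{\ln n})$, because $b_n\ln\ln n/\sqrt{\ln n}=o(b_n^2)$ once $b_n\to\infty$ and $b_n=o(\sqrt{\ln n})$.

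\textbf{Step 2: tail asymptotics.} For every fixed $x>0$, apply Theorem \ref{thm1} at the point $xb_n$. Combined with $\ln(1-\Phi(y))=-\frac{y^2}{2}(1+o(1))$ as $y\to\infty$, this gives
\[
\lim_{n\to\infty}\frac{1}{b_n^2}\ln\mathbf{P}\Big(\frac{a_nS_n}{b_n\sqrt{v_n+na_n^2\sigma^2}}\geq x\Big)=-\frac{x^2}{2}.
\]
The same limit holds for $\mathbf{P}(\cdot\leq -x)$, by the symmetric version of Theorem \ref{thm1} for $-a_nS_n$.

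\textbf{Step 3: passage to Borel sets.} For the upper bound, let $x_0=\inf_{x\in\bar B}|x|$. If $x_0=0$ the bound is trivial. Otherwise $\bar B\subset(-\infty,-x_0]\cup[x_0,\infty)$, and the two tail limits of Step 2 give $\limsup\leq -x_0^2/2$. For the lower bound, take $x\in B^o$ and $\varepsilon>0$ small with $(x-\varepsilon,x+\varepsilon)\subset B^o$. Suppose $x>0$. Then
\[
\mathbf{P}(\cdot\in(x-\varepsilon,x+\varepsilon))=\mathbf{P}(\cdot\geq x-\varepsilon)-\mathbf{P}(\cdot\geq x+\varepsilon).
\]
The two terms decay at rates $(x\mp\varepsilon)^2/2$, so the second is negligible and $\liminf\geq-(x-\varepsilon)^2/2$. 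The case $x<0$ is symmetric. If $0\in B^o$, the probability tends to one by the CLT, which is the $x=0$ case of Theorem \ref{thm1}. Letting $\varepsilon\to0$ and optimising over $x\in B^o$ yields \eqref{MDP1}.

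\textbf{Main obstacle.} The main point to check is Step 1: the constant $c$ in Theorem \ref{thm1} must be uniform in $x$ within the range, and $Q_n$ must be chosen independently of $x$ so that $xb_n$ stays inside the admissible range for all fixed $x$. Both are guaranteed by the $o(\cdot)$ formulation of the hypotheses on $b_n$ together with the choice of $\beta$ above.
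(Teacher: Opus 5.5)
Your proposal is correct and follows essentially the same route as the paper. You choose $Q_n=n^{\beta}$ with $\beta=1/(\alpha+2)$ (resp. $(3-4p)/(\alpha+2)$) so that both range constraints of Theorem \ref{thm1} become $n^{\alpha/(2\alpha+4)}$ (resp. $n^{(3-4p)\alpha/(2\alpha+4)}$). You then get the upper bound from $\mathbf{P}(|\cdot|\geq b_nx_0)$ and the lower bound from a difference of two tails over a small interval in $B^o$, exactly as the paper does. One cosmetic fix: $\mathbf{P}(\cdot\geq x-\varepsilon)-\mathbf{P}(\cdot\geq x+\varepsilon)$ is the probability of the half-open interval $[x-\varepsilon,x+\varepsilon)$, so take $\varepsilon$ small enough that this interval, with $x-\varepsilon>0$, lies in $B^o$, as the paper does.
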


When the random step sizes $(Z_k)_{k\geq1}$ have sub-exponential tails, we have the following large deviation principles (LDP).
\begin{theorem}\label{thm3}
Assume that there exist some constants $\alpha\in(0, 1)$  and $\beta>0$ such that
\begin{equation}\label{LDPc}
\lim_{x\rightarrow\infty}\frac{1}{x^\alpha}\ln \mathbf{P}(|Z_1-1|\geq x)=-\beta.
\end{equation}
Then for all $x > 0$ and all $0\leq p  < 1-\alpha/4$, the following equalities hold
    \[
    \lim_{n \to \infty} \frac{1}{n^\alpha} \ln \mathbf{P}\left( \frac{S_n}{n} \geq x \right) = -\beta\, x^\alpha \ \ \ \
\textrm{and} \ \ \ \
    \lim_{n \to \infty} \frac{1}{n^\alpha} \ln \mathbf{P}\left( \frac{S_n}{n} \leq -x \right) = -\beta \,x^\alpha.
    \]
\end{theorem}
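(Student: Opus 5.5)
We decompose $S_n = T_n + M_n$, where $T_n=\sum_{k=1}^n X_k$ is the ERW position and $M_n=\sum_{k=1}^n \varepsilon_k (Z_k-1)$ with $\varepsilon_1=X_1$ and $\varepsilon_k=\alpha_kX_{\beta_k}=X_k\in\{-1,1\}$ for $k\geq2$. Because $(Z_k)$ is independent of $(X_k)$ and $|\varepsilon_k|=1$, conditionally on $(X_k)_{k\geq1}$ the variable $M_n$ is a sum of independent terms $\varepsilon_k(Z_k-1)$. Each term has the same symmetric-in-modulus tail as $Z_1-1$, up to the sign of the two one-sided tails. The strategy is to show that $M_n/n$ carries the whole large deviation cost $\beta x^\alpha$ at speed $n^\alpha$, while $T_n/n$ is negligible at that speed.

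\textbf{Step 1: the ERW part is super-exponentially negligible at speed $n^\alpha$.} We use the classical martingale $L_n=a_nT_n$, whose increments satisfy $|a_kT_k-a_{k-1}T_{k-1}|\leq Ca_k$. Azuma--Hoeffding then gives $\mathbf{P}(|T_n|\geq \varepsilon n)\leq 2\exp\{-c\varepsilon^2 n^2a_n^2/v_n\}$. Since $a_n\asymp n^{1-2p}$, one has $n^2a_n^2/v_n\asymp n$ for $p<3/4$, $\asymp n/\ln n$ for $p=3/4$, and $\asymp n^{4-4p}$ for $p>3/4$. The condition $p<1-\alpha/4$ is exactly $4-4p>\alpha$, so in every regime $n^{-\alpha}\ln\mathbf{P}(|T_n|\geq\varepsilon n)\to-\infty$. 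For $p=0$ the same bound holds directly, since $a_n$ is not needed. This is the only place where the restriction on $p$ enters.

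\textbf{Step 2: the LDP for $M_n$.} We condition on $\mathcal{X}=\sigma(X_k,k\geq1)$.

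For the upper bound, use $|\varepsilon_k(Z_k-1)|=|Z_k-1|$ and apply a Nagaev/Fuk-type truncation at level $\delta n$. Then
\[\mathbf{P}(M_n\geq yn\mid\mathcal X)\leq n\mathbf{P}(|Z_1-1|\geq \delta n)+\mathbf{P}\Big(\sum_k \varepsilon_k(Z_k-1)\mathbf 1_{\{|Z_k-1|<\delta n\}}\geq yn\Big).\]
A finer version of this splits according to the number of big jumps. The standard argument for Weibull-tailed i.i.d.\ sums with $\alpha<1$ shows that the truncated sum contributes at most $\exp\{-(\beta-\eta)(yn)^\alpha\}$. The key ingredient here is the subadditivity $\sum x_i^\alpha\geq(\sum x_i)^\alpha$, which makes one big jump optimal, together with a Chernoff bound using $\mathbf{E}e^{\lambda (Z-1)\mathbf 1_{\{|Z-1|<\delta n\}}}$ with $\lambda\asymp n^{\alpha-1}$. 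The condition-\eqref{LDPc} tails are uniform in the signs $\varepsilon_k$, so the bound is uniform in $\mathcal X$. This gives $\limsup n^{-\alpha}\ln\mathbf{P}(M_n\geq yn)\leq-\beta y^\alpha$.

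For the lower bound, a single big jump suffices:
\[\mathbf{P}(M_n\geq yn)\geq \mathbf{P}\big(\varepsilon_1(Z_1-1)\geq (y+\eta)n\big)\,\mathbf{P}\Big(\sum_{k\geq2}\varepsilon_k(Z_k-1)\geq-\eta n\Big).\]
The second factor tends to $1$ by Chebyshev's inequality, conditionally on $\mathcal X$. The first factor needs the one-sided tail $\mathbf{P}(\pm(Z_1-1)\geq x)$, where the sign is chosen to match $\varepsilon_1=X_1$ (which is independent of $Z_1$ and equals $\pm1$ with probability $1/2$). Since \eqref{LDPc} only controls $|Z_1-1|$, at least one of the two one-sided tails has the rate $\beta$ along a subsequence. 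One can pick the step $k\leq n$ with $\varepsilon_k$ of the favourable sign, and such a step exists with overwhelming probability. I expect this to be the main obstacle: if only one one-sided tail is heavy along a given $x$-sequence, the argument must show that the ERW produces steps of both signs with probability $1-o(e^{-Kn^\alpha})$. This follows from Step 1, since having all signs equal forces $|T_n|=n$. Finally, letting $\eta\to0$ and using continuity of $y\mapsto y^\alpha$ gives the matching lower bound.

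\textbf{Step 3: combine.} We use
\[\mathbf{P}(M_n\geq(x+\varepsilon)n)-\mathbf{P}(|T_n|\geq\varepsilon n)\leq\mathbf{P}(S_n\geq xn)\leq \mathbf{P}(M_n\geq(x-\varepsilon)n)+\mathbf{P}(|T_n|\geq\varepsilon n).\]
By Step 1 the error term is of smaller exponential order than $e^{-Cn^\alpha}$ for every $C$. Taking $n^{-\alpha}\ln$, then $n\to\infty$, then $\varepsilon\to0$, yields the limit $-\beta x^\alpha$. The lower-tail statement follows by the same argument applied to $-S_n$, since the roles of the signs are symmetric.
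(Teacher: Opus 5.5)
Your proposal is correct and has the same skeleton as the paper's proof. The shared steps are:
\begin{itemize}
\item the decomposition $S_n=T_n+N_n$ (your $M_n$ is the paper's $N_n$);
\item an LDP for $N_n$ at speed $n^\alpha$ with rate $\beta x^\alpha$ (Lemma \ref{LDPzk-1});
\item negligibility of $T_n$ at that speed exactly when $4-4p>\alpha$;
\item the $\pm\varepsilon n$ sandwich.
\end{itemize}

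The differences lie in how the ingredients are obtained. For $T_n$, you derive \eqref{LDPT} yourself from Azuma--Hoeffding applied to $a_nT_n$, whose increments satisfy $|a_kT_k-a_{k-1}T_{k-1}|\le 2a_k$; the paper quotes the same bound from \cite{HFDI}. At $p=0$ the factor $\Gamma(2p)$ makes $a_n$ undefined, so your remark that $a_n$ is ``not needed'' should be replaced by normalising with $\Gamma(n)/\Gamma(n+2p-1)$. This equals $n-1$ when $p=0$ and gives the same $e^{-c\varepsilon^2 n}$.

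For the upper bound on $N_n$, the paper does not condition on the ERW path. It treats the $X_k(Z_k-1)$ as martingale differences for a filtration in which all the $X_k$ are already revealed. It then applies the Fan--Grama--Liu inequality (Lemma \ref{Upsilon}) to $(1-\varepsilon)N_n$. This needs only a moment $\mathbf{E}[(Z_1-1)^2\exp\{\beta'|Z_1-1|^\alpha\}]<\infty$ with $\beta'<\beta$, takes a few lines, and even controls $\max_k N_k$.

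Your route conditions on $\mathcal X$ and reruns the Nagaev-type argument for independent Weibull-tailed sums, uniformly in the signs: truncation, a Chernoff bound with $\lambda\asymp n^{\alpha-1}$, and subadditivity of $x\mapsto x^\alpha$. It is self-contained and valid. However, the multi-big-jump bookkeeping you only sketch is where all the work lies, and the paper outsources exactly this to the black-box inequality.

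For the lower bound, the obstacle you anticipate does not arise. Conditionally on $\mathcal X$, your product inequality holds with the second factor at least $1/2$, uniformly in $\mathcal X$, by Chebyshev. Since $X_1$ is independent of $Z_1$ and equals $\pm1$ with probability $1/2$ each, integrating out $X_1$ gives, for $a>0$,
$$\mathbf{P}(X_1(Z_1-1)\ge a)=\tfrac12\mathbf{P}(Z_1-1\ge a)+\tfrac12\mathbf{P}(Z_1-1\le -a)=\tfrac12\mathbf{P}(|Z_1-1|\ge a).$$
This is exactly how the paper sums the two cases $X_1=\pm1$ in \eqref{fsddsggs}, so only the two-sided tail in \eqref{LDPc} is ever used.

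Your favourable-sign detour still works, provided the sign is chosen separately for each $n$. A fixed sign along a subsequence would not control the $\liminf$. It also needs a step of that sign only with probability bounded away from $0$ (e.g.\ $\mathbf{P}(X_1=s)=1/2$), not with probability $1-o(e^{-Kn^\alpha})$.
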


More generally, we have the following result.
\begin{corollary}\label{col1} Assume the condition of Theorem \ref{thm3} is satisfied.
For all $p\in[0, 1-\alpha/4)$ and any Borel set $ B \subset\mathbb{R}$, we have
\begin{eqnarray}
-\inf_{x\in B^o} \beta\,|x|^\alpha&=&\liminf_{n\rightarrow\infty}\frac{1}{n^\alpha}\ln\mathbf{P}\left(\frac{S_n}{n}\in B \right) \nonumber \\
&\leq& \limsup_{n\rightarrow\infty}\frac{1}{n^\alpha}\ln\mathbf{P}\left(\frac{S_n}{n}\in B\right)=-\inf_{x\in\bar{B}}\beta\,|x|^\alpha, \nonumber
\end{eqnarray}
where $ B^o$ and $\bar{B}$ denote the interior and the closure of $B$ respectively.
\end{corollary}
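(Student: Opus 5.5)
The plan is to derive Corollary \ref{col1} from Theorem \ref{thm3} by the standard reduction of an LDP with a monotone rate function to one-sided tail limits. Write $I(x)=\beta|x|^\alpha$. It is continuous, vanishes only at $0$, is increasing on $[0,\infty)$ and decreasing on $(-\infty,0]$. First I record that $S_n/n\to 0$ in probability. Indeed, the second moment of $S_n$ is $O(n^2)$ with a sublinear correction: by the martingale structure and the bound $\mathbf{E}T_n^2=O(n^{\max(1,4p-2)})$, and since $4p-2<2$ for $p<1$, we get $\mathbf{E}(S_n/n)^2\to 0$ (using $\mathbf{E}Z_1^2<\infty$). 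Hence $\mathbf{P}(S_n/n\in B)\to 1$ whenever $0\in B^o$.

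\emph{Upper bound.} Let $F=\bar B$ and $m=\inf_{x\in F}I(x)$. If $0\in F$ then $m=0$ and the bound is trivial. Otherwise, since $F$ is closed, set $a=\inf(F\cap(0,\infty))>0$ and $b=\sup(F\cap(-\infty,0))<0$, with the convention that the corresponding term is omitted if that part is empty. Then
\[
\mathbf{P}(S_n/n\in B)\le \mathbf{P}(S_n/n\ge a)+\mathbf{P}(S_n/n\le b).
\]
By Theorem \ref{thm3}, the two terms have exponential rates $-\beta a^\alpha$ and $-\beta|b|^\alpha$ on the scale $n^\alpha$. The principle of the largest term therefore gives
\[
\limsup_{n\to\infty} n^{-\alpha}\ln \mathbf{P}(S_n/n\in B)\le -\min(\beta a^\alpha,\beta|b|^\alpha)=-m,
\]
since $a,b\in F$.

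\emph{Lower bound.} Take $x\in B^o$ and $\delta>0$ with $(x-\delta,x+\delta)\subset B$. If $x=0$, the convergence in probability gives the rate $0$. If $x>0$, shrink $\delta$ so that $x-\delta>0$. Then
\[
\mathbf{P}(S_n/n\in B)\ge \mathbf{P}(S_n/n\ge x-\delta/2)-\mathbf{P}(S_n/n\ge x+\delta/2).
\]
By Theorem \ref{thm3}, the second term is exponentially smaller than the first on the scale $n^\alpha$, because $\beta(x+\delta/2)^\alpha>\beta(x-\delta/2)^\alpha$. So the difference is at least $\tfrac12\mathbf{P}(S_n/n\ge x-\delta/2)$ for large $n$, and
\[
\liminf_{n\to\infty} n^{-\alpha}\ln \mathbf{P}(S_n/n\in B)\ge -\beta(x-\delta/2)^\alpha\ge -I(x).
\]
The case $x<0$ is symmetric, using the lower-tail statement of Theorem \ref{thm3}. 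Taking the supremum over $x\in B^o$ yields the lower bound.

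\emph{The equalities.} The corollary writes the bounds as equalities, $\liminf=-\inf_{B^o}I$ and $\limsup=-\inf_{\bar B}I$, and these do not hold for every Borel set. For example, take $B$ to be the rationals: then $B^o=\emptyset$ while $\bar B=\mathbb{R}$, so the claimed equalities cannot both describe the same sequence. Also, $\inf_{B^o}I$ and $\inf_{\bar B}I$ may differ. I would therefore prove the inequalities as above, which is the LDP. Equality holds in the natural cases, namely when $B$ is an $I$-continuity set or an interval. There the two bounds coincide, because for an interval the infimum of $I$ over the interior equals the infimum over the closure by continuity and monotonicity of $I$.

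\emph{Main obstacle.} This corollary adds no real difficulty of its own, since all the work is in Theorem \ref{thm3}. The only delicate step is the lower bound at points where the tails must be subtracted, and that is handled by the strict monotonicity of $x\mapsto\beta x^\alpha$.
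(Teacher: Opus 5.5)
Your proof is correct and follows essentially the paper's route. For the upper bound you use the tails beyond the points of $\bar B$ nearest to $0$, where the paper uses the single event $\{|S_n/n|\ge \inf_{x\in B}|x|\}$. For the lower bound you write the probability of a small interval inside $B^o$ as a difference of two tails whose rates are strictly ordered by Theorem \ref{thm3}, exactly as the paper does.

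The one real difference is the case $0\in B^o$. You handle it via $S_n/n\to 0$ in probability, whereas the paper just picks a nonzero point of $B^o$ close to $0$. That convergence in fact follows directly from Theorem \ref{thm3}, so your second-moment bound is not needed. It is also slightly off: it misses a factor $\ln n$ at $p=3/4$, although $o(n^2)$ still holds.

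You are right that for general Borel $B$ only the two inequalities, i.e.\ the LDP, can hold. The paper's own proof establishes nothing more.

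Your $B=\mathbb{Q}$ justification needs tightening. $\liminf=-\infty$ together with $\limsup=0$ is not contradictory in itself, so you must inspect the actual sequence. For atomless $Z_1$ one has $\mathbf{P}(S_n/n\in\mathbb{Q})=0$ for every $n$, so the $\limsup$ is $-\infty$, not $0$. Finally, your ``interval'' case for equality should be restricted to nondegenerate intervals, since it fails for a single point.
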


Corollary \ref{col1} states that with condition (\ref{LDPc}),   $(S_n/n)$ satisfies an LDP with speed $n^\alpha$ and a good rate function $\beta|x|^\alpha$.

When the ERW with random step sizes satisfy  the  regularly varying tails condition, we have the following  precise large deviations.
\begin{theorem}\label{thm5}
Assume that there exits a constant $\gamma\in(2, \infty)$ such that
\begin{equation}\label{fcdnsa}
\mathbf{P}(Z_1-1\geq x)\sim \theta\frac{\mathcal{L}(x)}{x^\gamma}\ \  \quad \textrm{and} \ \  \quad \mathbf{P}(Z_1-1\leq -x)\sim (1-\theta)\frac{\mathcal{L}(x)}{x^\gamma}
\end{equation}
as $x \rightarrow \infty,$ where  $\theta\in (0, 1)$ is a constant and $\mathcal{L}(x)$ is a slowly varying function.
Then for all $0\leq p < 3/4$, the following equalities hold
\begin{equation*}
\lim_{n\rightarrow\infty}\sup_{x_n\gg \sqrt{n\ln n}}\left|\frac{\mathbf{P}(S_n\geq x_n)}{n\mathbf{P}(|Z_1|\geq x_n)}-\frac{1}{2}\right|=
\lim_{n\rightarrow\infty}\sup_{x_n\gg \sqrt{n\ln n}}\left|\frac{\mathbf{P}(S_n\leq -x_n)}{n\mathbf{P}(|Z_1|\geq x_n)}-\frac{1}{2}\right|=0.
\end{equation*}
\end{theorem}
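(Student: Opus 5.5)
The plan is to split off the walk part from the step-size part and to apply a Nagaev-type precise large deviation result conditionally on the signs. Since $Y_k = X_k Z_k$ with $X_k\in\{-1,1\}$, and using $\nu=1$, we write
\[
S_n = T_n + W_n, \qquad W_n:=\sum_{k=1}^n X_k (Z_k-1),
\]
where $T_n=\sum_{k=1}^n X_k$ is the classical ERW. The sequence $(Z_k)$ is independent of $(X_k)$. Hence, conditionally on $\mathcal{X}:=\sigma(X_k,\,k\geq 1)$, the variables $X_k(Z_k-1)$ are independent and centred, and there are only two types:
\begin{itemize}
\item if $X_k=1$, the right tail is $\sim\theta\mathcal{L}(x)x^{-\gamma}$;
\item if $X_k=-1$, the right tail is $\sim(1-\theta)\mathcal{L}(x)x^{-\gamma}$.
\end{itemize}
Let $N_n^{\pm}$ be the number of $k\le n$ with $X_k=\pm1$. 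Nagaev's precise large deviation theorem for independent, regularly varying summands with $\gamma>2$ (in its version for finitely many distribution types) should give
\[
\mathbf{P}(W_n\geq x\mid\mathcal{X}) = (1+o(1))\bigl(\theta N_n^+ +(1-\theta)N_n^-\bigr)\mathcal{L}(x)x^{-\gamma}.
\]
This holds uniformly in $x\geq t\sqrt{n\ln n}$ for any fixed $t>\sqrt{\gamma-2}$, and uniformly in the realisation of $\mathcal{X}$. The uniformity in $\mathcal{X}$ should follow because only two fixed laws, with variances at most $\sigma^2$, are mixed. In particular, always $\mathbf{P}(W_n\ge x\mid\mathcal{X})\le C n\mathcal{L}(x)x^{-\gamma}$.

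Next I replace $N_n^\pm$ by $n/2$. We have $N_n^+-N_n^-=T_n$. For $p<3/4$, the martingale $M_n=a_nT_n$ has increments bounded by $Ca_n$, and $v_n\asymp n a_n^2$. Azuma's inequality then gives $\mathbf{P}(|T_n|\ge y)\le 2\exp(-cy^2/n)$. On the event $G_n=\{|T_n|\le\delta n\}$ we get $\theta N_n^+ +(1-\theta)N_n^- = n/2+O(\delta n)$. On $G_n^c$ the crude conditional bound applies, so the contribution is at most $Cn\mathcal{L}(x)x^{-\gamma}\mathbf{P}(G_n^c)=o(n\mathcal{L}(x)x^{-\gamma})$ uniformly in $x$. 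Letting $\delta\to0$ yields $\mathbf{P}(W_n\ge x)\sim \tfrac12 n\mathcal{L}(x)x^{-\gamma}$ uniformly for $x\gg\sqrt{n\ln n}$. Finally, $\mathbf{P}(|Z_1|\ge x)\sim\mathbf{P}(|Z_1-1|\ge x)\sim\mathcal{L}(x)x^{-\gamma}$, so $n\mathbf{P}(|Z_1|\ge x)$ is the correct normaliser.

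To add $T_n$ back, I use a sandwich argument for fixed $\varepsilon\in(0,1)$:
\[
\mathbf{P}(W_n\ge(1+\varepsilon)x_n)-\mathbf{P}(T_n\le-\varepsilon x_n)\le\mathbf{P}(S_n\ge x_n)\le \mathbf{P}(W_n\ge(1-\varepsilon)x_n)+\mathbf{P}(T_n\ge\varepsilon x_n).
\]
The $T_n$ terms are negligible, by two cases on the size of $x_n$:
\begin{itemize}
\item If $\varepsilon x_n>n$, then $\mathbf{P}(|T_n|\ge\varepsilon x_n)=0$, since $|T_n|\le n$.
\item If $\varepsilon x_n\le n$, then $n\mathcal{L}(x_n)x_n^{-\gamma}\ge n^{1-\gamma-\eta}$ for small $\eta>0$. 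Meanwhile $\exp(-c\varepsilon^2x_n^2/n)\le n^{-K}$ for every $K$, because $x_n^2/(n\ln n)\to\infty$.
\end{itemize}
Regular variation gives $((1\pm\varepsilon)x)^{-\gamma}\mathcal{L}((1\pm\varepsilon)x)\sim(1\pm\varepsilon)^{-\gamma}\mathcal{L}(x)x^{-\gamma}$, uniformly by the uniform convergence theorem. Letting $\varepsilon\to0$ gives the first limit. The left tail follows by the same argument applied to $-S_n=-T_n+\sum X_k(1-Z_k)$, where the roles of $\theta$ and $1-\theta$ swap and the average is again $1/2$.

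The main obstacle is the uniform conditional Nagaev step. The summands are not identically distributed, and their mix depends on the random $\mathcal{X}$, so I need both the asymptotic and the crude upper bound to hold uniformly over all sign configurations and all $x_n\gg\sqrt{n\ln n}$. I would first try to treat the $\mathcal{X}$-conditional sum as the sum of two independent i.i.d. blocks of sizes $N_n^+$ and $N_n^-$, and apply the i.i.d. Nagaev theorem to each block. The one-big-jump convolution lemma for two independent regularly varying sums then combines the blocks. A separate crude bound, via a Fuk–Nagaev inequality, would cover blocks whose size is small compared with $n$.
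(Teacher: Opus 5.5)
Your proposal is correct, but it gets the key asymptotics $\mathbf{P}(W_n\ge x)\sim\frac n2\mathcal{L}(x)x^{-\gamma}$ by a different route from the paper.

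\textbf{The paper's route.} The paper never conditions on the signs. It treats $N_n=\sum_k X_k(Z_k-1)$ as a martingale and splits $\mathbf{P}(N_n\ge x_n)=P_{1n}+P_{2n}+P_{3n}$ by one big jump at level $y_n=x_n/\ln n$. The single-jump term comes from integrating $\mathbf{P}(Z_k-1>\max\{y_n,x_n-u\})$ against the law of $(N_n-A_k,\max_{i\neq k}A_i,X_k)$, using only $\mathbf{P}(X_k=\pm1)=1/2$. The two-jump and no-jump terms are controlled by the Fuk--Nagaev-type martingale inequalities of Fan--Grama--Liu. Then $T_n$ is added back with the vanishing shift $\Delta_n=x_n/\sqrt{\mathcal{G}(n)}$.

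\textbf{Your route and what it buys.} You use that, given $\sigma(X_k,k\ge1)$, $W_n$ is a sum of independent variables of two fixed types. The walk's memory then enters only through $N_n^{\pm}$, and you can import the classical i.i.d.\ Nagaev theorem. This shows clearly that the asymptotics of $W_n$ hold for every $p$, and that $p<3/4$ is needed only for the sub-Gaussian tail of $T_n$. In fact, since $\mathbf{E}T_n=0$, you have $\mathbf{E}[\theta N_n^++(1-\theta)N_n^-]=n/2$ exactly. So once the conditional asymptotics are uniform in the signs, you can take expectations directly, and the event $G_n$ is unnecessary. Your case split $\varepsilon x_n>n$ versus $\varepsilon x_n\le n$ also makes the negligibility of $T_n$ explicitly uniform in $x_n$.

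\textbf{What it costs.} The uniform two-type statement is not off the shelf. It does follow from the i.i.d.\ theorem, and you need no separate Fuk--Nagaev bound for small blocks. Take a threshold $b_n\gg\sqrt{n\ln n}$ and $x\ge b_n$. For block sizes $m\ge m_0$, use Nagaev's uniform asymptotics, valid because $x\ge b_n\gg\sqrt{m\ln m}$. For the finitely many $m<m_0$, use $\mathbf{P}(\zeta_m\ge x)\sim m\,\mathbf{P}(Z_1-1\ge x)$ as $x\to\infty$, and its analogue for $1-Z_k$. Combine the two independent blocks $U,V$ as follows. For the upper bound, use $\{U-V\ge x\}\subset\{U\ge(1-\epsilon)x\}\cup\{-V\ge(1-\epsilon)x\}\cup\{U\ge\epsilon x,\,-V\ge\epsilon x\}$; the cross term is $O(n^2\mathcal{L}(x)^2x^{-2\gamma})=o(n\mathcal{L}(x)x^{-\gamma})$. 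For the lower bound, use inclusion--exclusion plus Chebyshev. This works for $x\gg\sqrt{n\ln n}$, which is all the theorem needs. Your claimed uniformity down to $t\sqrt{n\ln n}$ with $t>\sqrt{\gamma-2}$ would require redoing the one-big-jump argument, but you never use it.

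\textbf{A minor slip.} For $1/2<p<3/4$ the $a_k$ decrease, so the increments $a_k\varepsilon_k$ of $a_nT_n$ are bounded by $2a_k$, not by $Ca_n$. Azuma still gives $\mathbf{P}(|T_n|\ge y)\le 2\exp\{-cy^2/n\}$, since $\sum_{k\le n}a_k^2=v_n\asymp na_n^2$.
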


The condition (\ref{fcdnsa}) is called as  regularly varying tails condition, see  \cite{MWPLDPRVS}.  Denote  by $\zeta_n=\sum_{k=1}^{n}(Z_k-1)$.
With condition (\ref{fcdnsa}), the following equalities have been introduced in \cite{NAVILTLDP,NSVLDP}:
\begin{equation}\label{PLD+}
\lim_{n\rightarrow\infty}\sup_{x_n\gg b_n}\left|\frac{\mathbf{P}(\zeta_n\geq x_n)}{n\mathbf{P}(Z_1-1\geq x_n)}-\theta\right|=0
\end{equation}
and
\begin{equation}\label{PLD-}
\lim_{n\rightarrow\infty}\sup_{x_n\gg b_n}\left|\frac{\mathbf{P}(\zeta_n\leq -x_n)}{n\mathbf{P}(Z_1-1\leq -x_n)}-(1-\theta)\right|=0.
\end{equation}
Results of the type \eqref{PLD+} and \eqref{PLD-} are called precise large deviations.
For a random walk with a subexponential step-size distribution, precise large deviations can be found in Denisov \emph{et al.}\ \cite{DDSLDP}.
For some dependent sequences, precise large deviations may also hold. For instance, Mikosch and Wintenberger \cite{MWPLDPRVS} have obtained the precise large deviations for  dependent regularly varying sequences.

\section{Preliminary lemmas}\label{lemms2}
\setcounter{equation}{0}
In this section, we collect some preliminary lemmas used in the proofs of the main results.
By the Stirling formula
\[
\ln  \Gamma(x)  =(x-\frac12) \ln x -x + \frac12 \ln 2 \pi + O(\frac{1}{x})  \ \ \ \  \ \textrm{as}\ \ x \rightarrow \infty,
\]
 we deduce that for all $  p\in(0, 1]$,
\begin{eqnarray}\label{an}
\lim\limits_{n\to \infty} a_n n^{2p-1} =\Gamma(2p).
\end{eqnarray}
For all $  p\in(0, 3/4)$, we   have
\begin{eqnarray}\label{vn1}
\lim\limits_{n\to \infty} \frac{ v_n}{ n^{3-4p}} = \frac{\Gamma{(2p)}^2}{3-4p}
\end{eqnarray}
and, for $ p=3/4$,  we also have
\begin{eqnarray}\label{vn2}
\lim\limits_{n\to \infty} \frac{ v_n}{  \ln n}    =\frac{\pi}{4}.
\end{eqnarray}
Denote
\begin{equation}\label{angamma}
\gamma _n=1+\frac{2p-1}{n},\ \ \ \ \ \ \ \ \ n\geq 1.
\end{equation}
For all $n\geq2$, it holds
\begin{equation}\label{an1}
a_n= \prod_{i=1}^{n-1}\frac{1}{\gamma_i}.
\end{equation}
For all $ 0 \leq k \leq 2n $, denote by the filtration $ \mathcal{F}_0 = \sigma\{\emptyset, \Omega\} $ and
\[
\mathcal{F}_{k} =
\begin{cases}
\sigma\{ \alpha_{i}, \beta_{i} : 1 \leq i \leq k \}, & 1 \leq k \leq n, \\
\sigma\{ \alpha_{i}, \beta_{i}, Z_{j} : 1 \leq i \leq n, \; 1 \leq j \leq k - n \}, & n < k \leq 2n.
\end{cases}
\]
It is easy to know
\begin{equation*}
S_n=T_n+\sum_{k=1}^{n}\alpha_kX_{\beta_k}(Z_k-1).
\end{equation*}
Therefore, for all $ 0 \leq k \leq 2n $, define the process
\[
M_{n,k} = \mathbf{E}[a_nS_n \mid \mathcal{F}_{k}] =
\begin{cases}
a_{k} T_{k}, & 1 \leq k \leq n, \\
a_{n} T_{n} + a_{n} \sum_{i=1}^{k-n} \alpha_{i} X_{\beta_{i}} (Z_{i} - 1), & n < k \leq 2n,
\end{cases}
\]
where, as a convention, $\alpha_1$ has a Rademacher distribution $\mathcal{R}(1/2)$ and $X_{\beta_1}=1$.
 Then $ (M_{n,k}, \mathcal{F}_{k})_{1 \leq k \leq 2n} $ is  a martingale with additive decomposition
\begin{equation}\label{Mnk}
M_{n,k} = \mathbf{E}[a_nS_n \mid \mathcal{F}_{k}] =
\begin{cases}
\sum_{i=1}^{k} a_{i} \varepsilon_{i}, & 1 \leq k \leq n, \\
\sum_{i=1}^{n} a_{i} \varepsilon_{i} + a_{n} \sum_{i=1}^{k-n} \alpha_{i} X_{\beta_{i}} (Z_{i} - 1), & n < k \leq 2n,
\end{cases}
\end{equation}
where $ \varepsilon_{i} = T_{i} - \gamma_{i-1} T_{i-1} $ and $ \gamma_{0} T_{0} = 0 $. The martingale differences $ \Delta M_{n,k} = M_{n,k} - M_{n,k-1} $ satisfy
\begin{equation}\label{dMnk}
\Delta M_{n,k} = M_{n,k} - M_{n,k-1} =
\begin{cases}
a_{k} \varepsilon_{k}, & 2 \leq k \leq n, \\
a_{n} \alpha_{k-n} X_{\beta_{k-n}} (Z_{k-n} - 1), & n < k \leq 2n,
\end{cases}
\end{equation}
 with $ \Delta M_{n,1} = M_{n,1} $. Denote by $M_{2n}=M_{n, 2n}$, then $$M_{2n}=a_nS_n.$$
 Let $Q_n$ be a non-random number depending on $n$  and $Q_n\geq1$. Consider the following modified ERW with random step sizes:
\begin{equation}\label{tmerw}
\hat{S}_n=\sum_{k=1}^{n}X_k(Z_k-1)\textbf{I}_{\{|Z_k-1|\leq Q_n\}}+T_n.
\end{equation}
Then we have
\begin{equation}\label{Mnk}
\hat{M}_{n,k} =
\begin{cases}
\sum_{i=1}^{k} a_{i} \varepsilon_{i}, & 1 \leq k \leq n, \\
\sum_{i=1}^{n} a_{i} \varepsilon_{i} + a_{n} \sum_{i=1}^{k-n} \alpha_{i} X_{\beta_{i}} (Z_{i} - 1)\textbf{I}_{\{|Z_i-1|\leq Q_n\}}, & n < k \leq 2n
\end{cases}
\end{equation}
and
\begin{equation}
\langle \hat{M}\rangle_{2n}=\sum_{k=1}^{2n}\mathbf{E}(\Delta\hat{M}_{n,k}^{2}|\mathcal{F}_{k-1}),
\end{equation}
where
\begin{equation}
\Delta\hat{M}_{n,k}=
\begin{cases}
a_{k} \varepsilon_{k}, &  1\leq k \leq n, \\
a_{n} \alpha_{k-n} X_{\beta_{k-n}} (Z_{k-n} - 1)\textbf{I}_{\{|Z_{k-n}-1|\leq Q_n\}}, & n < k \leq 2n.
\end{cases}
\end{equation}
To construct a new martingale for the truncation part, $\Delta\hat{M}_{n,k}$ can be decomposed as follows:
\begin{equation}\label{MDM}
\Delta\hat{M}_{n,k}=\Delta\check{M}_{n,k}+\Delta R_{n,k},
\end{equation}
where
\begin{equation}\label{DeltaMnk}
\Delta\check{M}_{n,k}=
\begin{cases}
a_{k} \varepsilon_{k}, & 1 \leq k \leq n, \\
a_{n} \alpha_{k-n} X_{\beta_{k-n}}\left[ (Z_{k-n} - 1)\textbf{I}_{\{|Z_{k-n}-1|\leq Q_n\}} \right. \\
\left.\ \ \ \ \ \ \ \ \ \ \ \  \ \ \ \ \ \  \    -\mathbf{E}[(Z_{k-n} - 1)\textbf{I}_{\{|Z_{k-n}-1|\leq Q_n\}}]\right], &  n < k \leq 2n
\end{cases}
\end{equation}
and
\begin{equation}\label{DeltaRnk}
\Delta R_{n,k}=
\begin{cases}
0, & 1 \leq k \leq n, \\
a_{n} \alpha_{k-n} X_{\beta_{k-n}}\mathbf{E}[(Z_{k-n} - 1)\textbf{I}_{\{|Z_{k-n}-1|\leq Q_n\}}], & n < k \leq 2n.
\end{cases}
\end{equation}
We denote   $$\check{M}_{2n}=\sum_{k=1}^{2n}\Delta\check{M}_{n,k} \ \ \ \  \textrm{and} \ \ \ \   R_{2n}=\sum_{k=1}^{2n}\Delta R_{n,k}.$$
Clearly, $(\Delta\check{M}_{n,k}, \mathcal{F}_{k})_{1\leq k \leq 2n}$ is also a martingale difference sequence. It has the following property.

\begin{lemma}\label{MNK}
  For all $1\leq k\leq 2n$ and all $p\in (0, 3/4]$, we have for $n$ large enough,
\begin{eqnarray*}
\| \Delta \check{M}_{n,k} \|_\infty &\leq&
\begin{cases}
2\,Q_n a_n, & 0<p\leq 1/2, \\
  2\, Q_n , &1/2<p<3/4,\\
  1 , &p=3/4.
\end{cases}
\end{eqnarray*}
\end{lemma}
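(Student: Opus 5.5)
The plan is to bound the two kinds of increments in \eqref{DeltaMnk} separately: the ERW part $a_k\varepsilon_k$ for $1\le k\le n$, and the truncated step-size part for $n<k\le 2n$. Then, in each regime of $p$, compare the resulting bounds using the monotonicity of $a_k$ from \eqref{an1}.

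\emph{ERW part, $1\le k\le n$.} From the definition of $\varepsilon_k$ and \eqref{angamma},
\[
\varepsilon_k=T_k-\gamma_{k-1}T_{k-1}=X_k-\frac{2p-1}{k-1}\,T_{k-1}\quad (k\ge2),\qquad \varepsilon_1=X_1 .
\]
Since $|X_k|=1$ and $|T_{k-1}|\le k-1$, this gives $|\varepsilon_k|\le 1+|2p-1|\le 2$, and hence $|a_k\varepsilon_k|\le 2a_k$.

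\emph{Truncated part, $n<k\le 2n$.} The indicator gives $|(Z_{k-n}-1)\mathbf{I}_{\{|Z_{k-n}-1|\le Q_n\}}|\le Q_n$, and the same bound holds for its expectation. Since $|\alpha_{k-n}X_{\beta_{k-n}}|=1$, we get $|\Delta\check M_{n,k}|\le 2Q_na_n$.

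\emph{Case analysis on $p$.} By \eqref{an1}, $a_k=\prod_{i<k}\gamma_i^{-1}$ with $a_1=1$.
\begin{itemize}
\item For $0<p\le1/2$ we have $\gamma_i\le1$, so $a_k$ is nondecreasing and $a_k\le a_n$ for $k\le n$. Since $Q_n\ge1$, $2a_k\le 2a_n\le 2Q_na_n$, which covers both ranges of $k$.
\item For $1/2<p<3/4$ we have $\gamma_i>1$, so $a_k\le a_1=1$ for all $k$. Then $2a_k\le 2\le 2Q_n$ and $2Q_na_n\le 2Q_n$.
\item For $p=3/4$ the bound has to be sharpened to $1$. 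Here $\varepsilon_k=X_k-T_{k-1}/(2(k-1))$, so $|\varepsilon_k|\le 3/2$, while $a_k\le a_2=1/\gamma_1=2/3$ for $k\ge2$. Hence $|a_k\varepsilon_k|\le1$ for $k\ge2$, and $|a_1\varepsilon_1|=|X_1|=1$.
\end{itemize}

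\emph{Main obstacle.} The delicate step is $n<k\le 2n$ in the case $p=3/4$. There we need $2Q_na_n\le1$. By \eqref{an}, $a_n\sim\Gamma(3/2)n^{-1/2}$, so the bound holds for $n$ large provided the truncation level satisfies $Q_n=o(\sqrt n)$, for instance $Q_n=n^\beta$ with $\beta<1/2$.

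This restriction is genuinely needed: at $\beta=1/2$ one gets $2Q_na_n\to\sqrt\pi>1$. So for $p=3/4$ I would take the truncation level of this smaller order, which is compatible with the range $x=o(\sqrt{\ln n})$ in part [iii] of Theorem \ref{thm1}. The phrase ``for $n$ large enough'' in the statement should be read as referring to this choice. All other cases hold for every $n$.
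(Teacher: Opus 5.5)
Your proposal follows the same route as the paper: bound $a_k\varepsilon_k$ via $|\varepsilon_k|\le 1+|2p-1|$, bound the truncated step-size increments by $2Q_na_n$, and compare the two using the monotonicity of $(a_k)$ from \eqref{an1}. For $p=3/4$ and $k\le n$, your refinement ($|\varepsilon_k|\le 3/2$ and $a_k\le a_2=2/3$ for $k\ge 2$) is exactly what is needed to reach the constant $1$. The paper's summary bound $2\max\{a_1,\ldots,a_n,Q_na_n\}$ cannot give $1$, because $a_1=1$.

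The one thing to correct is your claim that $Q_n=o(\sqrt n)$ is ``genuinely needed'' at $p=3/4$. It is needed only for the crude estimate $2Q_na_n$, not for the lemma itself. The paper's assertion that $Q_na_n\to 0$ whenever $Q_n\le\sqrt n$ has the same defect at $\beta=1/2$. The fix is that the centering constant is negligible. Since $\mathbf{E}(Z_1-1)=0$,
\[
\big|\mathbf{E}[(Z_1-1)\mathbf{I}_{\{|Z_1-1|\le Q_n\}}]\big|=\big|\mathbf{E}[(Z_1-1)\mathbf{I}_{\{|Z_1-1|> Q_n\}}]\big|\le \frac{\sigma^2}{Q_n}\le \sigma^2 .
\]
Hence for $n<k\le 2n$,
\[
\|\Delta\check M_{n,k}\|_\infty\le a_n(Q_n+\sigma^2)\le a_n\sqrt n+a_n\sigma^2 ,
\]
and the right-hand side tends to $\Gamma(3/2)=\sqrt\pi/2<1$ by \eqref{an}. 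This holds uniformly over $1\le Q_n\le\sqrt n$. So the case $p=3/4$ holds for $n$ large enough over the whole range $\beta\in(0,1/2]$ allowed in Theorem~\ref{thm1}. You do not need to shrink the truncation level or reinterpret ``for $n$ large enough''; with this estimate in place, your proof covers the full statement.
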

\noindent
\textit{Proof.}
For $k=1,$ it holds
\begin{equation*}
\| \Delta \check{M}_{n,1} \|_\infty =\lVert a_1\varepsilon_1\rVert_\infty=\lVert T_1  \rVert_\infty=1.
\end{equation*}
For all $2\leq k\leq n$, we have
\begin{equation*}
\lVert a_k\varepsilon_k\rVert_\infty=\lVert a_kT_k-a_{k-1}T_{k-1}\rVert_\infty=\Big\| a_k X_k-a_k\frac{T_{k-1}}{k-1} (2p-1)\Big\|_\infty\leq 2
\max\{p, 1-p\}a_k\leq 2a_k.
\end{equation*}
For all $n<k\leq 2n$, we have
\begin{align*}
& \big\|a_{n} \alpha_{k-n} X_{\beta_{k-n}}\left[ (Z_{k-n} - 1)\textbf{I}_{\{|Z_{k-n}-1|\leq Q_n\}}-\mathbf{E}[(Z_{k-n} - 1)\textbf{I}_{\{|Z_{k-n}-1|\leq Q_n\}}]\right] \big\|_\infty\\
&\quad=a_n \big\|(Z_{k-n} - 1)\textbf{I}_{\{|Z_{k-n}-1|\leq Q_n\}}-\mathbf{E}[(Z_{k-n} - 1)\textbf{I}_{\{|Z_{k-n}-1|\leq Q_n\}}] \big\|_\infty \\
&\quad\leq 2a_nQ_n.
\end{align*}
Therefore, for all $1\leq k\leq 2n$, it holds
\begin{equation*}
\|\Delta \check{M}_{n,k}\|_\infty\leq2\max\big\{a_1, a_2, \ldots ,a_n, Q_na_n\big\}.
\end{equation*}
Since $a_k=\gamma_ka_{k+1}$, it is easy to see that if $0<p\leq1/2$, then the sequence $(a_k)$ increases monotonously, and that if $1/2<p<3/4$, the sequence $(a_k)$ decreases monotonously. Recall that $1\leq Q_n \leq \sqrt{n}$. When $p=3/4$, by (\ref{an}),  $Q_na_n \rightarrow 0$ as $n \rightarrow \infty$. Therefore, it holds for $n$ large enough,
\begin{equation*}
\max_{1\leq k \leq 2n}\| \Delta \check{M}_{n,k} \|_\infty \leq
\begin{cases}
2\,Q_n a_n, & 0<p\leq 1/2, \\
  2 Q_n  , &1/2<p<3/4,\\
  1, & p=3/4.
\end{cases}
\end{equation*}
This completes the proof of Lemma \ref{MNK}. \hfill\qed

By Lemma 7.2 in \cite{FSCMD}, we have the following lemma.
\begin{lemma}\label{lem2}Denote $\check{\sigma}_n^2=\operatorname{Var} ((Z_k-1)\mathbf{I}_{\{|Z_k-1|\leq Q_n\}})$. Let $\langle \check{M}\rangle_{2n}=\sum_{k=1}^{2n}\mathbf{E}(\Delta\check{M}_{n, k}^{2}|\mathcal{F}_{k-1}).$  Then, it holds for all $x>0,$
\begin{eqnarray*}
&&   \mathbf{P}\bigg( \big| \langle \check{M}\rangle_{2n}  -(v_n  + na_n^2 \check{\sigma}_n^2 )    \big|\geq x (v_n  + na_n^2 \check{\sigma}_n^2 )  \bigg)\\
&&\ \ \ \ \ \ \ \ \ \ \ \ \  \ \ \ \ \ \ \ \ \  \leq\   \left\{ \begin{array}{ll}
\displaystyle c_1 \exp\bigg\{-   c_2 x \frac{n a_n^2\check{\sigma}_n^2 +v_n}{  a_n^2}   \bigg\}, \ \ \ \   & \textrm{$0< p < 1/2$,}\\
0,\ \ \ \   & \textrm{$  p = 1/2$,}\\
\displaystyle  c_1 \exp\bigg\{-c_2 (n a_n^2\check{\sigma}_n^2 +v_n) x     \bigg\},  \ \ \ \    & \textrm{$1/2 < p < 3/4$,}\\
\displaystyle c_1\,   \exp\bigg\{- c_2 (n a_n^2\check{\sigma}_n^2 +v_n) x   \bigg\}, \ \ \ \    & \textrm{$  p =3/4$} .
\end{array} \right.
\end{eqnarray*}
\end{lemma}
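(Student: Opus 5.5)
The plan is to split $\langle \check{M}\rangle_{2n}$ into its two natural blocks and bound their fluctuations separately. For $n<k\le 2n$ we have $\alpha_{k-n}^2X_{\beta_{k-n}}^2=1$. Moreover, $Z_{k-n}$ is independent of $\mathcal{F}_{k-1}$, which only involves $(\alpha_i,\beta_i)_{i\le n}$ and $Z_j$ for $j<k-n$. Hence
\[
\mathbf{E}(\Delta\check{M}_{n,k}^2\mid\mathcal{F}_{k-1})=a_n^2\check{\sigma}_n^2 \quad\text{deterministically.}
\]
The second block therefore contributes exactly $na_n^2\check{\sigma}_n^2$ and does not fluctuate. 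It follows that
\[
\langle \check{M}\rangle_{2n}-(v_n+na_n^2\check{\sigma}_n^2)=\sum_{k=1}^n a_k^2\,\mathbf{E}(\varepsilon_k^2\mid\mathcal{F}_{k-1})-v_n ,
\]
which is exactly the quantity controlled for the classical ERW (the case $\sigma=0$) in Lemma 7.2 of \cite{FSCMD}.

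Next, recall the explicit form of the conditional variance. Since $\varepsilon_k=X_k-(2p-1)T_{k-1}/(k-1)$, we get $\mathbf{E}(\varepsilon_k^2\mid\mathcal{F}_{k-1})=1-(2p-1)^2T_{k-1}^2/(k-1)^2$, so
\[
\sum_{k=1}^n a_k^2\mathbf{E}(\varepsilon_k^2\mid\mathcal{F}_{k-1})-v_n=-(2p-1)^2\sum_{k=2}^n a_k^2\frac{T_{k-1}^2}{(k-1)^2}.
\]
This is a nonpositive quantity independent of the step sizes. It vanishes identically when $p=1/2$, which gives the zero bound in that case. For $p\ne1/2$ we invoke the tail estimate from \cite{FSCMD}, namely Lemma 7.2 there, for the event $\{(2p-1)^2\sum_k a_k^2T_{k-1}^2/(k-1)^2\ge y\}$ with $y=x(v_n+na_n^2\check{\sigma}_n^2)$. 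That lemma yields a bound of the form $c_1\exp\{-c_2 y/a_n^2\}$ for $p<1/2$ and $c_1\exp\{-c_2 y\}$ for $1/2<p\le3/4$. Substituting $y$ gives the four displayed cases.

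The only real check is that the constants in the cited estimate do not depend on the truncation level. This holds here because $Q_n$ enters only through the deterministic term $na_n^2\check{\sigma}_n^2$, which simply enlarges $y$. If the cited lemma is stated only with $\sigma^2$ in place of $\check{\sigma}_n^2$, I would reproduce its argument directly. That argument uses the bounded martingale differences $a_k\varepsilon_k$ (Lemma \ref{MNK}) together with an exponential inequality for $\max_k|a_kT_k|$, converting $\sum a_k^2T_{k-1}^2/(k-1)^2$ into a bound on $\max_k|a_kT_k|^2$ times $\sum_k (a_k(k-1))^{-2}a_k^2\cdot$(weights). This step, with the correct normalisation for each range of $p$, is the main obstacle.
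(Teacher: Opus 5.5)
Your proposal is correct and is essentially the paper's argument, which consists only of citing Lemma 7.2 of \cite{FSCMD}. Your observation that the step-size block of $\langle \check{M}\rangle_{2n}$ is the deterministic quantity $na_n^2\check{\sigma}_n^2$, so that the fluctuation is the $Z$-free term $(2p-1)^2\sum_{k=2}^n a_k^2T_{k-1}^2/(k-1)^2$, is exactly why that citation transfers to the truncated martingale. No fallback is needed even if the cited lemma is phrased with $\sigma^2$: since $x>0$ is arbitrary there, it is really a bound on $\mathbf{P}\bigl((2p-1)^2\sum_{k=2}^n a_k^2T_{k-1}^2/(k-1)^2\geq y\bigr)$ for every $y>0$. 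Your sketched route through $\max_k|a_kT_k|^2$ would in fact lose the normalisation, giving only $c_1\exp\{-c_2 y/v_n\}$; the right argument bounds $T_{k-1}^2/(k-1)$ (with an extra factor $\ln k$ in the denominator when $p=3/4$) uniformly in $\psi_1$-norm via Azuma's inequality, then sums the corresponding weights, whose total is $\asymp a_n^2$ for $p<1/2$ and bounded for $1/2<p\leq 3/4$.
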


We also have the absolute error between $\sigma^2$ and $\check{\sigma}_n^2$ as follows.
\begin{lemma}\label{sigma2}
It holds
\begin{equation}
0\leq\sigma^2-\check{\sigma}_n^2\leq 2\exp{\{-Q_{n}^{\alpha}\}}\mathbf{E}[(Z_1-1)^2\exp{\{|Z_1-1|^\alpha\}}].
\end{equation}
\end{lemma}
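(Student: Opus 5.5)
The plan is to write $W=Z_1-1$ and express $\sigma^2-\check{\sigma}_n^2$ exactly in terms of the tail part $W\mathbf{I}_{\{|W|>Q_n\}}$. Both inequalities then follow from elementary bounds on that tail part. Since $\mathbf{E}Z_1=1$, we have $\mathbf{E}W=0$ and $\mathbf{E}W^2=\sigma^2$.

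\textbf{Step 1: exact identity.} By definition,
\begin{equation*}
\check{\sigma}_n^2=\mathbf{E}\big[W^2\mathbf{I}_{\{|W|\leq Q_n\}}\big]-\big(\mathbf{E}\big[W\mathbf{I}_{\{|W|\leq Q_n\}}\big]\big)^2 .
\end{equation*}
Because $\mathbf{E}W=0$, the truncated mean equals minus the tail mean:
\begin{equation*}
\mathbf{E}\big[W\mathbf{I}_{\{|W|\leq Q_n\}}\big]=-\mathbf{E}\big[W\mathbf{I}_{\{|W|>Q_n\}}\big].
\end{equation*}
Writing $\sigma^2=\mathbf{E}[W^2\mathbf{I}_{\{|W|\leq Q_n\}}]+\mathbf{E}[W^2\mathbf{I}_{\{|W|>Q_n\}}]$ and subtracting, we get
\begin{equation*}
\sigma^2-\check{\sigma}_n^2=\mathbf{E}\big[W^2\mathbf{I}_{\{|W|>Q_n\}}\big]+\big(\mathbf{E}\big[W\mathbf{I}_{\{|W|>Q_n\}}\big]\big)^2 .
\end{equation*}
Both terms are nonnegative, which gives the lower bound $0\leq \sigma^2-\check{\sigma}_n^2$.

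\textbf{Step 2: control the squared tail mean.} By the Cauchy--Schwarz inequality,
\begin{equation*}
\big(\mathbf{E}\big[W\mathbf{I}_{\{|W|>Q_n\}}\big]\big)^2\leq \mathbf{E}\big[W^2\mathbf{I}_{\{|W|>Q_n\}}\big]\,\mathbf{P}(|W|>Q_n)\leq \mathbf{E}\big[W^2\mathbf{I}_{\{|W|>Q_n\}}\big].
\end{equation*}
Hence $\sigma^2-\check{\sigma}_n^2\leq 2\,\mathbf{E}[W^2\mathbf{I}_{\{|W|>Q_n\}}]$. This explains the factor $2$ in the statement.

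\textbf{Step 3: Markov-type exponential bound.} On the event $\{|W|>Q_n\}$ we have $|W|^\alpha> Q_n^\alpha$, so $\exp\{|W|^\alpha-Q_n^\alpha\}\geq 1$ there. Therefore
\begin{equation*}
\mathbf{E}\big[W^2\mathbf{I}_{\{|W|>Q_n\}}\big]\leq \exp\{-Q_n^\alpha\}\,\mathbf{E}\big[W^2\exp\{|W|^\alpha\}\big].
\end{equation*}
Combining this with Step 2 yields the claimed upper bound.

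No step is a real obstacle; the argument is purely elementary. The only point to flag is bookkeeping of the constant $c$ from condition \eqref{wda}. If one keeps $c$, the same argument with $\exp\{c|W|^\alpha-cQ_n^\alpha\}\geq 1$ gives the bound $2\exp\{-cQ_n^\alpha\}\mathbf{E}[W^2\exp\{c|W|^\alpha\}]$, whose right-hand side is finite under \eqref{wda}. The statement as written corresponds to $c=1$, or equivalently to a rescaling of $Q_n$. Either form suffices for the later use of the lemma, since only exponential smallness in $Q_n^\alpha$ matters there.
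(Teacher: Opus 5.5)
Your proof is correct and is essentially the paper's. The paper expands $\operatorname{Var}\bigl((Z_1-1)-(Z_1-1)\mathbf{I}_{\{|Z_1-1|>Q_n\}}\bigr)$ to get $\sigma^2-\check{\sigma}_n^2=2\mathbf{E}[(Z_1-1)^2\mathbf{I}_{\{|Z_1-1|>Q_n\}}]-\operatorname{Var}\bigl((Z_1-1)\mathbf{I}_{\{|Z_1-1|>Q_n\}}\bigr)$, which is your identity rewritten, then drops the variance term (your Cauchy--Schwarz step) and applies the same exponential Markov bound. Your form makes the lower bound explicit, whereas the paper only asserts that truncation does not increase the variance, which silently uses $\mathbf{E}(Z_1-1)=0$. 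Your remark about the constant $c$ from \eqref{wda} is also accurate.
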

\noindent
\textit{Proof.}
We have
$$\sigma^2= \operatorname{Var}(Z_1)= \operatorname{Var}(Z_1-1) \geq  \operatorname{Var}\bigl((Z_1-1) \textbf{I}_{\{|Z_1-1| \leq Q_n\}}\bigr)= \check{\sigma}_n^2,$$
which implies $ \sigma^2-\check{\sigma}_n^2 \geq 0$.
We also have
\begin{align*}
\sigma^2 - \check{\sigma}_n^2
&= \mathbf{E}(Z_1-1)^2 - \operatorname{Var}\bigl((Z_1-1) \textbf{I}_{\{|Z_1-1| \leq Q_n\}}\bigr) \\
&= \mathbf{E}(Z_1-1)^2 - \operatorname{Var}\bigl((Z_1-1) - (Z_1-1) \textbf{I}_{\{|Z_1-1| > Q_n\}}\bigr) \\
&= \mathbf{E}(Z_1-1)^2 - \mathbf{E}(Z_1-1)^2 + 2\mathbf{E}(Z_1-1)^2 \textbf{I}_{\{|Z_1-1| > Q_n\}}    -\operatorname{Var}((Z_1-1)\textbf{I}_{\{|Z_1-1|>Q_n\}}) \\
&\leq 2\mathbf{E}(Z_1-1)^2 \textbf{I}_{\{|Z_1-1| > Q_n\}} \\
&\leq 2\exp{\{-Q_n^\alpha\}} \mathbf{E}[(Z_1-1)^2 \exp{\{|Z_1-1|^\alpha\}}].
\end{align*}
This completes the proof of Lemma \ref{sigma2}.
\hfill\qed

In the proof of Theorem \ref{thm1}, we shall make use of a Cram\'{e}r-type moderate deviation result for normalized martingales. Let $(\xi_i, \mathcal{F}_i)_{i=0,\dots,n}$ be a finite sequence of martingale differences defined on a probability space $(\Omega, \mathcal{F}, \mathbf{P})$, where $\xi_0 = 0$, $\{\emptyset, \Omega\} = \mathcal{F}_0 \subseteq \dots \subseteq \mathcal{F}_n \subseteq \mathcal{F}$ are increasing $\sigma$-fields. Here, the martingale differences $(\xi_i)_{i=1,\dots,n}$ are allowed to depend on $n$. Set
\begin{equation*}
U_0=0,\ \ \ \ \ U_k=\sum_{i=1}^{k}\xi_i, \quad k=1,...,n.
\end{equation*}
Denote by $\left\langle U\right\rangle $   the quadratic characteristic of the
martingale $U=(U_k,\mathcal{F}_k)_{k=0,...,n},$ that is
\begin{equation}\label{quad}
\left\langle U\right\rangle _0=0,\ \ \ \ \ \ \ \ \left\langle U\right\rangle _k=\sum_{i=1}^k\mathbf{E}(\xi _i^2|\mathcal{F}
_{i-1}),\ \ \ \ \quad k=1,...,n.
\end{equation}
In the sequel, we will make use of the following conditions:
\begin{description}
\item[(A1)]  There exists   $\epsilon_n \in (0, \frac12] $   such that
\[
|\mathbf{E}(\xi_{i}^{k}  | \mathcal{F}_{i-1})| \leq \frac12 k!\epsilon_n^{k-2} \mathbf{E}(\xi_{i}^2 | \mathcal{F}_{i-1}),\ \ \   k\geq 2\ \textrm{and} \ 1\leq i\leq n.
\]
\item[(A2)]  There exist  a number  $ \delta_n\in (0, \frac12]$  and a positive constant $C$ such that for all $  x >0 ,$
\[
 \mathbf{P}( \left| \left\langle U\right\rangle _n-1\right|  \geq x   )  \leq  C \exp\{  - x  \delta_n^{-2} \}.
\]
\end{description}

The following lemma has been established by Fan and Shao \cite{FSCMD} (see Theorem 2.2 and Remark 2.1 therein), see also \cite{GHLDP} for bounded martingale differences.
\begin{lemma}\label{CMD31}
Assume that the conditions (A1) and (A2) are satisfied.  Then for all $0 \leq  x   =o( \min\{\epsilon_n^{-1}, $ $  \delta_n ^{-1 } \}),$ it holds
\begin{equation}\label{t0ie1}
 \bigg| \ln \frac{\mathbf{P}(U_n/\sqrt{\left\langle U\right\rangle_n}  >x)}{1-\Phi \left( x\right)} \bigg|\leq c  \bigg( x^3  (\epsilon_n  + \delta_n)+   (1+  x )  \big(  \delta_n|\ln \delta_n| + \epsilon_n|\ln \epsilon_n| \big) \bigg).
\end{equation}
Moreover, the same inequality holds true when $U_n/\sqrt{\left\langle U\right\rangle_n} $ is replaced by $-U_n/\sqrt{\left\langle U\right\rangle_n}$.
\end{lemma}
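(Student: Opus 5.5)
Lemma \ref{CMD31} is quoted from Fan and Shao \cite{FSCMD}, so within this paper it can simply be invoked. If one wanted to reprove it, I would follow the classical Cramér conjugate-measure method adapted to martingales. It suffices to treat $U_n/\sqrt{\langle U\rangle_n}$, since the statement for $-U_n$ follows by applying the same argument to $(-\xi_i)$, which again satisfies (A1) and (A2). For $0\le x\le 1$ (or $x$ bounded) the claim reduces to a martingale Berry--Esseen bound of order $\epsilon_n|\ln\epsilon_n|+\delta_n|\ln\delta_n|$, and I would take that Berry--Esseen bound as the basic input. So the work is for $1\le x=o(\min\{\epsilon_n^{-1},\delta_n^{-1}\})$.

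\textbf{Step 1: change of measure and cumulant expansion.} For $0\le\lambda\le \epsilon_n^{-1}/4$, I would set
\[
Z_k(\lambda)=\prod_{i=1}^k \frac{e^{\lambda\xi_i}}{\mathbf{E}(e^{\lambda\xi_i}|\mathcal{F}_{i-1})}, \qquad d\mathbf{P}_\lambda=Z_n(\lambda)\,d\mathbf{P}.
\]
Condition (A1) gives
\[
\ln\mathbf{E}(e^{\lambda\xi_i}|\mathcal{F}_{i-1})=\tfrac{\lambda^2}{2}\mathbf{E}(\xi_i^2|\mathcal{F}_{i-1})\bigl(1+O(\lambda\epsilon_n)\bigr).
\]
Under $\mathbf{P}_\lambda$, I would write $U_n=B_n(\lambda)+Y_n(\lambda)$. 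Here $B_n(\lambda)=\lambda\langle U\rangle_n(1+O(\lambda\epsilon_n))$ is the drift, and $Y_n(\lambda)$ is a $\mathbf{P}_\lambda$-martingale with quadratic characteristic $\langle U\rangle_n(1+O(\lambda\epsilon_n))$. Then
\[
\mathbf{P}(U_n>x\sqrt{\langle U\rangle_n})=\mathbf{E}_\lambda\bigl[\exp\{-\lambda U_n+\Psi_n(\lambda)\}\mathbf{1}_{\{U_n>x\sqrt{\langle U\rangle_n}\}}\bigr], \qquad \Psi_n(\lambda)=\sum_i\ln\mathbf{E}(e^{\lambda\xi_i}|\mathcal{F}_{i-1}).
\]
I would choose $\lambda=x$, matching the Gaussian case.

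\textbf{Step 2: localize $\langle U\rangle_n$.} By (A2), the event $\{|\langle U\rangle_n-1|\ge x\delta_n\}$ has probability at most $C\exp\{-x^{-1}\delta_n^{-2}\cdot x^2\}$, more precisely $\le C e^{-x\delta_n^{-1}}$. Since $x=o(\delta_n^{-1})$, this is $o(e^{-x^2/2})$ times the claimed error, hence negligible relative to $1-\Phi(x)$. The same bound holds under $\mathbf{P}_\lambda$ after a Cauchy--Schwarz step using the exponential moments. On the complementary event, the exponent $-\lambda U_n+\Psi_n(\lambda)$ equals $-\lambda Y_n-\lambda^2/2$ up to an error $O(\lambda^3\epsilon_n+\lambda^2\cdot x\delta_n)=O(x^3(\epsilon_n+\delta_n))$. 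This is exactly the $x^3$ term in \eqref{t0ie1}.

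\textbf{Step 3: Gaussian comparison.} It remains to compare $\mathbf{E}_\lambda[e^{-\lambda Y_n}\mathbf{1}_{\{Y_n>0\}}]$, suitably shifted, with the corresponding normal integral $\int_0^\infty e^{-\lambda t}\,d\Phi(t)\asymp 1/(1+\lambda)$. Integration by parts converts the difference into $\sup_t|\mathbf{P}_\lambda(Y_n\le t)-\Phi(t)|$ times a factor of order $1$. Dividing by $1/(1+x)$ then gives the relative error $(1+x)(\epsilon_n|\ln\epsilon_n|+\delta_n|\ln\delta_n|)$. The lower bound is obtained the same way, possibly with $\lambda$ shifted slightly above $x$ to absorb the drift error.

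\textbf{Main obstacle.} The hardest step is the Berry--Esseen bound for $Y_n(\lambda)$ under $\mathbf{P}_\lambda$, uniformly in $0\le\lambda=o(\epsilon_n^{-1})$. The martingale differences are conditionally tilted, and the normalization is random. I would first try a Bolthausen-type martingale Berry--Esseen argument. It requires (A1) to be transferred to the tilted conditional moments, which still hold with $\epsilon_n$ replaced by $2\epsilon_n$. It also requires (A2) to be transferred to the tilted quadratic characteristic. Together these yield the rate $\epsilon_n|\ln\epsilon_n|+\delta_n|\ln\delta_n|$.
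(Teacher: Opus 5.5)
Your proposal matches the paper: the paper gives no proof of Lemma \ref{CMD31} of its own and simply quotes it from Fan and Shao \cite{FSCMD} (Theorem 2.2 and Remark 2.1 there), exactly as in your opening sentence. Your optional reproof sketch (conjugate measure plus a Berry--Esseen bound under the tilted measure) is the standard route for such results, but it is only an outline. Its substantive step is left unproved: a Berry--Esseen bound for $Y_n(\lambda)$ under $\mathbf{P}_\lambda$, uniform in $0\le\lambda=o(\min\{\epsilon_n^{-1},\delta_n^{-1}\})$, which requires controlling $\langle U\rangle_n$ under the tilted measure.
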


For any $\alpha\in(0, 1)$, set
\begin{equation}
\Upsilon(U_k)=\sum_{i=1}^{k}\mathbf{E}[\xi_{i}^{2}\exp{\{(\xi_{i}^{+})^\alpha\}}|\mathcal{F}_{i-1}], \quad k=1,...,n.
\end{equation}
By Corollary 2.2 in \cite{FGLDIMA}, we have the following lemma.
\begin{lemma}\label{Upsilon}
Assume  there exits a constant $\alpha\in (0, 1)$ such that $\sum_{k=1}^{n}\mathbf{E}[\xi_{k}^{2}\exp{\{(\xi_{k}^{+})^\alpha\}}]<\infty$. If
\begin{equation}
\|\Upsilon(U_n)\|_{\infty}=o(n^{2-\alpha}) \quad as\quad n\rightarrow \infty,
\end{equation}
then for any $x\geq0$, it holds
\begin{equation}
\limsup_{n \to \infty} \frac{1}{n^\alpha} \ln \mathbf{P}\left( \frac{1}{n} \max_{1 \leq k \leq n} U_k \geq x \right) \leq -x^\alpha.
\end{equation}
\end{lemma}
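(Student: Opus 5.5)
We prove Lemma \ref{Upsilon} directly, by a one-level truncation of the martingale differences combined with an exponential supermartingale. The case $x=0$ is trivial, so fix $x>0$ and set $y=y_n=xn$. Split each difference as $\xi_i=\xi_i'+\xi_i''$, where $\xi_i'=\xi_i\mathbf{I}_{\{\xi_i\leq y\}}$ and $\xi_i''=\xi_i\mathbf{I}_{\{\xi_i> y\}}$, and write $U_k'=\sum_{i\leq k}\xi_i'$. On the event $\{\max_{i\leq n}\xi_i\leq y\}$ we have $U_k=U_k'$ for every $k$. Hence
\[
\mathbf{P}\Big(\max_{1\leq k\leq n}U_k\geq xn\Big)\leq \mathbf{P}\Big(\max_{1\leq k\leq n}U_k'\geq xn\Big)+\sum_{i=1}^n\mathbf{P}(\xi_i>xn),
\]
and it suffices to show that both terms are at most $\exp\{-x^\alpha n^\alpha+o(n^\alpha)\}$.

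\emph{Large jumps.} By Markov's inequality, since $t^2e^{t^\alpha}$ is increasing for $t>0$,
\[
\mathbf{P}(\xi_i>xn)\leq \frac{e^{-(xn)^\alpha}}{(xn)^2}\,\mathbf{E}\big[\xi_i^2\exp\{(\xi_i^+)^\alpha\}\big].
\]
Summing over $i$ gives the bound $e^{-x^\alpha n^\alpha}\,\mathbf{E}\Upsilon(U_n)/(xn)^2\leq e^{-x^\alpha n^\alpha}o(n^{-\alpha})$, which is of the required order.

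\emph{Truncated part.} Take $\lambda=y^{\alpha-1}$. The key elementary inequality is
\[
e^{\lambda t}\leq 1+\lambda t+\tfrac12\lambda^2t^2\exp\{(t^+)^\alpha\}\qquad\text{for all } t\leq y.
\]
For $t\leq 0$ this is $e^s-1-s\leq s^2/2$ with $s=\lambda t\leq 0$. For $0<t\leq y$ we have $s=\lambda t=t\,y^{\alpha-1}\leq t^\alpha$, because $t^{1-\alpha}\leq y^{1-\alpha}$; then use $e^s-1-s\leq \frac12 s^2e^s$.

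Next, $\mathbf{E}[\xi_i'|\mathcal{F}_{i-1}]=-\mathbf{E}[\xi_i\mathbf{I}_{\{\xi_i>y\}}|\mathcal{F}_{i-1}]\leq 0$. Together with $1+u\leq e^u$, this gives
\[
\mathbf{E}[e^{\lambda\xi_i'}|\mathcal{F}_{i-1}]\leq \exp\Big\{\tfrac12\lambda^2\mathbf{E}[\xi_i^2\exp\{(\xi_i^+)^\alpha\}|\mathcal{F}_{i-1}]\Big\}.
\]
Consequently $V_k=\exp\{\lambda U_k'-\frac12\lambda^2\Upsilon(U_k)\}$ is a positive supermartingale with $V_0=1$. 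Since $\Upsilon(U_k)\leq\|\Upsilon(U_n)\|_\infty$, Doob's maximal inequality yields
\[
\mathbf{P}\Big(\max_{k\leq n}U_k'\geq xn\Big)\leq \exp\Big\{-\lambda xn+\tfrac12\lambda^2\|\Upsilon(U_n)\|_\infty\Big\}.
\]
Here $\lambda xn=(xn)^\alpha$ and $\lambda^2\|\Upsilon(U_n)\|_\infty=(xn)^{2\alpha-2}\,o(n^{2-\alpha})=o(n^\alpha)$.

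\emph{Conclusion.} Adding the two bounds, taking $\frac{1}{n^\alpha}\ln$ and letting $n\to\infty$ gives $\limsup\leq -x^\alpha$.

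The only delicate point is the choice of truncation level: it must coincide with the target level $xn$, rather than be a fraction $\varepsilon xn$ of it. With $\varepsilon<1$ the large-jump term would only yield the weaker exponent $-\varepsilon^\alpha x^\alpha$. With level $xn$, the exponent $\lambda t\leq t^\alpha$ matches exactly the weight $\exp\{(\xi^+)^\alpha\}$ appearing in $\Upsilon$. The hypothesis $\|\Upsilon(U_n)\|_\infty=o(n^{2-\alpha})$ is precisely what makes the quadratic correction negligible at speed $n^\alpha$.
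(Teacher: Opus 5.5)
Your proof is correct and complete. The pieces all check out:
\begin{itemize}
\item the elementary inequality $e^{\lambda t}\le 1+\lambda t+\tfrac12\lambda^2t^2\exp\{(t^+)^\alpha\}$ for $t\le y$ with $\lambda=y^{\alpha-1}$;
\item the sign $\mathbf{E}[\xi_i'|\mathcal{F}_{i-1}]\le 0$;
\item the supermartingale property of $V_k$, which holds because $\Upsilon(U_k)$ is predictable and dominates the conditional second-order terms of the truncated differences;
\item Ville's maximal inequality;
\item the Markov bound for jumps above $xn$, which uses $\mathbf{E}\Upsilon(U_n)\le\|\Upsilon(U_n)\|_\infty$.
\end{itemize}

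The paper gives no argument of its own for this lemma. It imports the statement from Corollary 2.2 of \cite{FGLDIMA}, where it is obtained as a consequence of a general non-asymptotic exponential inequality for martingales under the moment condition $\mathbf{E}[\xi_i^2\exp\{(\xi_i^+)^\alpha\}]<\infty$. Your route is self-contained and elementary, and it uses the hypothesis $\|\Upsilon(U_n)\|_\infty=o(n^{2-\alpha})$ exactly as stated. It also produces the explicit bound
\[
\mathbf{P}\Big(\max_{1\le k\le n}U_k\ge xn\Big)\le \exp\Big\{-(xn)^\alpha+\tfrac12(xn)^{2\alpha-2}\|\Upsilon(U_n)\|_\infty\Big\}+\frac{\mathbf{E}\Upsilon(U_n)}{(xn)^2}\,e^{-(xn)^\alpha}.
\]
This makes transparent that $x^\alpha$ is the single-big-jump rate. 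It also shows why the truncation level must be exactly $xn$: a lower level weakens the jump term, and a higher level forces a smaller tilt $\lambda$.

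What the citation buys instead is a ready-made general inequality. It comes from the same toolbox the paper draws on again, via Theorem 2.4 and Corollary 2.5 of \cite{FGLDIMA}, in the proof of Theorem \ref{thm5}. For the asymptotic statement of this lemma, your direct argument is fully sufficient.
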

 For $S_n$, we have the following decomposition
\begin{equation}\label{dec}
S_n=T_n +N_n,
\end{equation}
where $T_n=\sum_{k=1}^{n}X_k$ is defined by (\ref{ERW})  and $N_n=\sum_{k=1}^{n}A_k$, with $A_k=X_k(Z_k-1), 1\leq k \leq n$. According to  the proof of Theorem 2.3 in \cite{FGLDIMA}, we can obtain the following lemma.
\begin{lemma}\label{LDPzk-1}
If the condition  \eqref{LDPc} is satisfied, then we have  for all $x \geq 0$,
    \begin{equation}\label{decdfs1}
    \lim_{n \to \infty} \frac{1}{n^\alpha} \ln \mathbf{P}\left(\max_{1 \leq k \leq n} \frac{  N_k}{n} \geq x \right) = \lim_{n \to \infty} \frac{1}{n^\alpha} \ln \mathbf{P}\left( \frac{N_n}{n} \geq x \right) = -\beta\, x^\alpha
    \end{equation}
    and
    \begin{equation}\label{decdfs2}
    \lim_{n \to \infty} \frac{1}{n^\alpha} \ln \mathbf{P}\left(\min_{1 \leq k \leq n} \frac{  N_k}{n} \leq - x \right) = \lim_{n \to \infty} \frac{1}{n^\alpha} \ln \mathbf{P}\left( \frac{N_n}{n} \leq -x \right) = -\beta\, x^\alpha.
    \end{equation}
\end{lemma}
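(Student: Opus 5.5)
The plan is to prove matching lower and upper bounds on the exponential scale $n^\alpha$. The lower bound will come from a single big jump of the first summand. The upper bound will come from Lemma \ref{Upsilon}, applied to a suitably rescaled martingale. Since $\max_{1\leq k\leq n}N_k\geq N_n$, it suffices to show
\[
\liminf_{n\to\infty}\frac{1}{n^\alpha}\ln\mathbf{P}\left(\frac{N_n}{n}\geq x\right)\geq -\beta x^\alpha
\quad\textrm{and}\quad
\limsup_{n\to\infty}\frac{1}{n^\alpha}\ln\mathbf{P}\left(\max_{1\leq k\leq n}\frac{N_k}{n}\geq x\right)\leq -\beta x^\alpha .
\]
The statements \eqref{decdfs2} for the minimum then follow by applying the same argument to $-N_k=\sum_{i\le k}(-X_i)(Z_i-1)$; nothing below uses the sign of $X_i$. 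The case $x=0$ is trivial, so take $x>0$.

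\emph{Lower bound.} Fix $\varepsilon>0$ and write $N_n=A_1+N_n'$ with $N_n'=\sum_{k=2}^nA_k$. Condition on the whole sequence $(X_k)_{k\ge1}$. Because $(Z_k)$ is i.i.d.\ and independent of $(X_k)$, the variables $A_1,\dots,A_n$ are then conditionally independent. Moreover $N_n'$ has conditional mean $0$ and conditional variance $(n-1)\sigma^2$, since $X_k^2=1$. Conditional Chebyshev therefore gives $\mathbf{P}(N_n'\geq-n\varepsilon\mid X)\geq 1-\sigma^2/(n\varepsilon^2)$, deterministically. Hence
\[
\mathbf{P}(N_n\geq nx)\geq \Big(1-\frac{\sigma^2}{n\varepsilon^2}\Big)\,\mathbf{P}\big(X_1(Z_1-1)\geq n(x+\varepsilon)\big).
\]
The last factor equals $\tfrac12\mathbf{P}(|Z_1-1|\geq n(x+\varepsilon))$ for large $n$, because $X_1$ is a symmetric sign independent of $Z_1$. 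This is exactly where the two-sided condition \eqref{LDPc} suffices, even though it gives no information on the individual tails of $Z_1-1$. Condition \eqref{LDPc} then yields $\liminf \geq-\beta(x+\varepsilon)^\alpha$, and we let $\varepsilon\downarrow0$.

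\emph{Upper bound.} Use the filtration $\mathcal{G}_0=\sigma(X_k,k\ge1)$ and $\mathcal{G}_k=\sigma(X_i,i\geq1;\,Z_1,\dots,Z_k)$. Then $(A_k,\mathcal{G}_k)$ is a martingale difference sequence. Fix $\beta'\in(0,\beta)$ and set $\xi_k=\beta'^{1/\alpha}A_k$, so that $(\xi_k^+)^\alpha\leq\beta'|Z_k-1|^\alpha$. By \eqref{LDPc}, $\mathbf{E}[(Z_1-1)^2e^{\beta'|Z_1-1|^\alpha}]=:C_{\beta'}<\infty$, because the tail integral converges for $\beta'<\beta$. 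Consequently
\[
\Upsilon(U_n)\leq \beta'^{2/\alpha}C_{\beta'}\,n=o(n^{2-\alpha}),
\]
since $\alpha<1$. Lemma \ref{Upsilon} gives
\[
\limsup_{n\to\infty}\frac{1}{n^\alpha}\ln\mathbf{P}\Big(\max_{k\le n}\frac{N_k}{n}\geq x\Big)
=\limsup_{n\to\infty}\frac{1}{n^\alpha}\ln\mathbf{P}\Big(\max_{k\le n}\frac{U_k}{n}\geq \beta'^{1/\alpha}x\Big)\leq-\beta' x^\alpha .
\]
Letting $\beta'\uparrow\beta$ finishes the upper bound.

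\emph{Main obstacle.} The only delicate points are the integrability $C_{\beta'}<\infty$, which is a routine layer-cake computation using \eqref{LDPc}, and checking that Lemma \ref{Upsilon} is usable with $\mathcal{G}_0$ non-trivial. If the cited Corollary 2.2 requires $\mathcal{G}_0$ trivial, I would instead apply it conditionally on $(X_k)$, where the bound on $\Upsilon$ is uniform, and then integrate over $(X_k)$.
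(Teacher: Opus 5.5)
Your proposal is correct and is essentially the paper's proof. The upper bound applies Lemma \ref{Upsilon} to the martingale rescaled by a constant just below $\beta^{1/\alpha}$; the paper normalizes $\beta=1$ and multiplies by $1-\varepsilon$, which is the same device. The lower bound forces one big jump of $A_1=X_1(Z_1-1)$ while keeping $N_n-N_1\geq -n\varepsilon$.

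The only difference is minor. You control $N_n-N_1$ by conditional Chebyshev given the whole sequence $(X_k)$, whereas the paper uses a martingale CLT and then splits on $X_1=\pm1$; your route is simpler and fully rigorous. Your worry about a non-trivial $\mathcal{G}_0$ is moot: taking $\mathcal{G}_0$ trivial and $\mathcal{G}_k=\sigma(X_1,\dots,X_n,Z_1,\dots,Z_k)$ gives the same bounds.
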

\noindent
\textit{Proof.}
Without loss of generality, we assume that $\beta=1$ in the condition \eqref{LDPc}. Otherwise,  we consider $\beta^{1/\alpha}(Z_k-1)$ instead of $(Z_k-1)$. By the condition  \eqref{LDPc}, for any $\varepsilon \in (0, 1),$ it holds
 $$C_\varepsilon:=\mathbf{E}[((1-\varepsilon)|Z_1-1|)^2\exp{\{((1-\varepsilon)|Z_1-1|)^{\alpha}\}}]<\infty.$$ Thus, it holds $$\|\Upsilon((1-\varepsilon)N_n)\|_{\infty}=\sum_{k=1}^{n}\mathbf{E}[((1-\varepsilon)|Z_1-1|)^2\exp{\{((1-\varepsilon)|Z_1-1|)^{\alpha}\}}]=nC_\varepsilon=o(n^{2-\alpha})$$ as $n\rightarrow\infty$. Therefore, by Lemma \ref{Upsilon} with $U_k=(1-\varepsilon)N_k$, we have  for all $x\geq 0$,
\begin{equation}\nonumber
\limsup_{n \to \infty} \frac{1}{n^\alpha} \ln \mathbf{P}\left( \frac{1}{n} \max_{1 \leq k \leq n} N_k \geq  \frac{x}{1-\varepsilon} \right) \leq -x^\alpha,
\end{equation}
that is, for all $x\geq 0$,
\begin{equation}\nonumber
\limsup_{n \to \infty} \frac{1}{n^\alpha} \ln \mathbf{P}\left( \frac{1}{n} \max_{1 \leq k \leq n} N_k \geq x \right) \leq -x^\alpha (1-\varepsilon)^\alpha.
\end{equation}
Letting $\varepsilon\rightarrow 0,$ we get for all $x\geq0$,
\begin{equation}\label{uperbound}
\limsup_{n \to \infty} \frac{1}{n^\alpha} \ln \mathbf{P}\left( \frac{1}{n} \max_{1 \leq k \leq n} N_k \geq x \right) \leq -x^\alpha.
\end{equation}

Next, we give a proof for the lower bound.  It is easy to see that for all $x \geq 0$ and any $\varepsilon>0$,
\begin{eqnarray}
 \mathbf{P}(N_n\geq nx) &\geq& \mathbf{P}\big(N_n-N_1\geq-n\varepsilon, N_1\geq n(x+\varepsilon) \big) \nonumber \\
& =&\mathbf{P}\big(N_n-N_1\geq-n\varepsilon, N_1\geq n(x+\varepsilon), X_1=1 \big) \nonumber \\
& &\ \  + \mathbf{P}\big(N_n-N_1\geq-n\varepsilon, N_1\geq n(x+\varepsilon), X_1=-1 \big) \nonumber \\
& =& \mathbf{P}\big(N_n-N_1\geq-n\varepsilon, Z_1-1\geq n(x+\varepsilon), X_1=1 \big) \nonumber \\
& &\ \   + \mathbf{P}\big(N_n-N_1\geq-n\varepsilon, Z_1-1\leq -n(x+\varepsilon), X_1=-1 \big) \nonumber \\
&=& \mathbf{P}\big(N_n-N_1\geq-n\varepsilon, X_1=1)\mathbf{P}(Z_1-1\geq n(x+\varepsilon)\big) \nonumber \\
& &\ \   + \mathbf{P}\big(N_n-N_1\geq-n\varepsilon, X_1=-1)\mathbf{P}(Z_1-1\leq -n(x+\varepsilon)\big) \nonumber \\
& =& \frac{1}{2}\mathbf{P}\big(N_n-N_1\geq-n\varepsilon| X_1=1)\mathbf{P}(Z_1-1\geq n(x+\varepsilon)\big) \nonumber \\
&&\ \   + \frac{1}{2}\mathbf{P}\big(N_n-N_1\geq-n\varepsilon| X_1=-1)\mathbf{P}(Z_1-1\leq -n(x+\varepsilon)\big). \label{fsddsggs}
\end{eqnarray}
Clearly,  $(A_k, \mathcal{F}_{n+k})_{1\leq k \leq n}$ is a finite sequence of martingale differences defined on a probability space $(\Omega ,\mathcal{F},\mathbf{P})$. Notice that $\sum_{k=2}^{n}\mathbf{E}[A_k^2|\mathcal{F}_{n+k-1}]=\sum_{k=2}^{n}\mathbf{E}[(Z_k-1)^2]=(n-1)\sigma^2$ is a non-random number. For any $\varepsilon>0$, it holds
\begin{align*}
&\frac{1}{(n-1)\sigma^2} \sum_{k=2}^{n} \mathbf{E}\left[ A_k^2 \mathbf{I}_{\{|A_k| > \varepsilon \sqrt{(n-1)\sigma^2}\}} \mid \mathcal{F}_{n+k-1} \right] \\
& = \frac{1}{(n-1)\sigma^2} \sum_{k=2}^{n} \mathbf{E}\left[ (Z_k - 1)^2 \mathbf{I}_{\{|Z_k - 1| > \varepsilon \sqrt{(n-1)\sigma^2}\}}\right] \\
&  \leq \frac{1}{(n-1)\sigma^2}\sum_{k=2}^{n}\exp{\{-\varepsilon^\alpha\sigma^\alpha(n-1)^{\alpha/2}\}}\mathbf{E}[(Z_k-1)^2\exp{\{|Z_k-1|^\alpha\}}]\rightarrow0,\ \  n\rightarrow\infty.
\end{align*}
Therefore,  conditional Lindeberg's condition for martingales is satisfied, and then we have the following central limit theorem:
\begin{equation}\label{normal}
\frac{N_n-N_1}{\sqrt{n-1}\sigma} \xrightarrow{d}\mathcal{N}(0, 1),
\end{equation}
where $\xrightarrow{d}$ stands for convergence in distribution.
Moreover, $(N_n)_{n\geq1}$ is a martingale such that $N_n/n\to0$ in probability. Of course \eqref{normal} implies this convergence $\mathbf{P}(N_n-N_1\geq-n\varepsilon| X_1=1)\to1$. And for $\mathbf{P}(N_n-N_1\geq-n\varepsilon| X_1=-1)$, we have the same conclusion. Notice that $$\frac{1}{2}\mathbf{P}\Big(Z_1-1\geq n(x+\varepsilon)\Big)+\frac{1}{2}\mathbf{P}\Big(Z_1-1\leq -n(x+\varepsilon)\Big)=\frac{1}{2}\mathbf{P}\Big(|Z_1 - 1| \geq n(x + \varepsilon)\Big).$$ By  the condition  \eqref{LDPc}, we have the following lower bound
\begin{equation*}
\frac{1}{2}\mathbf{P}\Big(|Z_1 - 1| \geq n(x + \varepsilon) \Big) \geq \frac{1}{2}\exp \Big \{ -\left(n(x + \varepsilon)\right)^\alpha(1 + \varepsilon) \Big  \}
\end{equation*}
for sufficiently large $n$. Therefore, it follows from (\ref{fsddsggs}) that for any $x\geq 0,$
\begin{equation*}
\liminf_{n\rightarrow\infty}\frac{1}{n^\alpha}\ln \mathbf{P}\bigg(\frac{N_n}{n}\geq x\bigg)\geq -(x+\varepsilon)^\alpha(1+\varepsilon).
\end{equation*}
Letting $\varepsilon\rightarrow 0$, we obtain  the lower bound: For any $x\geq0,$
\begin{equation}\label{lowebound}
\liminf_{n\rightarrow\infty}\frac{1}{n^\alpha}\ln \mathbf{P}\bigg(\frac{N_n}{n}\geq x\bigg)\geq -x^\alpha.
\end{equation}
Clearly, it holds
  \begin{equation} \nonumber
  \ln \mathbf{P}\left(\max_{1 \leq k \leq n} \frac{  N_k}{n} \geq x \right) \geq   \ln \mathbf{P}\left( \frac{N_n}{n} \geq x \right) .
    \end{equation}
By the last inequality and the inequalities (\ref{lowebound}) and (\ref{uperbound}), we get (\ref{decdfs1}). The inequalities (\ref{decdfs2})
can be proved in a similar method. This completes the proof of lemma. \hfill\qed

\section{Proofs of Theorem \ref{thm1}  and its Corollaries }\label{bscsd}
\setcounter{equation}{0}

\subsection{Proof of Theorem \ref{thm1}}
 Denote by
\begin{equation*}
\frac{\check{M}_{2n}}{\sqrt{v_n+na_{n}^{2}\check{\sigma}_n^2}}=\sum_{k=1}^{2n}\xi_k,
\end{equation*}
where
\begin{equation*}
\xi_k=\frac{\Delta\check{M}_{n,k}}{\sqrt{v_n+na_{n}^{2}\check{\sigma}_n^2}}.
\end{equation*}
By the proof of Lemma \ref{MNK}, we have
\begin{equation}\label{epsilon}
\|\xi_k\|_\infty \leq \epsilon_n :=
\begin{cases}
\displaystyle \frac{2\,Q_n a_n}{\sqrt{v_n+na_{n}^{2}\check{\sigma}_n^2}}, & 0<p\leq 1/2, \\
\displaystyle \frac{ 2 Q_n }{\sqrt{v_n+na_{n}^{2}\check{\sigma}_n^2}} , &1/2<p<3/4,\\
\displaystyle \frac{ 1}{\sqrt{v_n+na_{n}^{2}\check{\sigma}_n^2}}, & p=3/4.
\end{cases}
\end{equation}
From (\ref{an})-(\ref{vn2}), we can deduce that
\begin{eqnarray}\label{ineq37}\textcolor{black}{
\displaystyle
\epsilon_n   \asymp  \left\{ \begin{array}{ll}\displaystyle
  \ \frac{Q_n }{\sqrt{n}}, \ \   & \textrm{ $0<p \leq  1/2$},\\
 \displaystyle  \frac{Q_n }{ n^{  (3-4p)/2}}, \ \   & \textrm{ $1/2 <p<3/4$}, \\
\displaystyle \frac{1}{\sqrt{\ln n}}  ,  \ \   & \textrm{ $  p=3/4$.}
\end{array} \right. }
\end{eqnarray}
Moreover, by Lemma \ref{lem2}, we have for all $x>0$,
\begin{eqnarray*}
  \mathbf{P}\Bigg( \Big|\sum_{k=1}^{2n}  \mathbf{E}( \xi_k^2| \mathcal{F}_{k-1}) -1   \Big |\geq x   \Bigg)
&\leq& \left\{ \begin{array}{ll}
\displaystyle c_1 \exp\bigg\{-   c_2 x \frac{v_n + na_n^2 \check{\sigma}_n^2}{  a_n^2}   \bigg\}, \ \    & \textrm{$0< p < 1/2$},\\
0,\ \ \ \   & \textrm{$  p = 1/2$},\\
\displaystyle  c_1 \exp\bigg\{-c_2  ( v_n + na_n^2 \check{\sigma}_n^2)x     \bigg\},  \ \    & \textrm{$1/2 < p < 3/4$},\\
\displaystyle c_1\,   \exp\bigg\{- c_2 (v_n + na_n^2 \check{\sigma}_n^2) x   \bigg\}, \ \    & \textrm{$  p =3/4$} .
\end{array} \right.
\end{eqnarray*}
Using relations \eqref{an}-\eqref{vn2}, we have that for all $x>0$ and for $n$ large enough,
\begin{eqnarray*}
  \mathbf{P}\bigg( \Big|\sum_{k=1}^{2n} \mathbf{E}( \xi_k^2| \mathcal{F}_{k-1}) -1    \Big|\geq x   \bigg)
&\leq&  c_1\,   \exp\bigg\{- c_2\, \delta_n^{-2} x   \bigg\},
\end{eqnarray*}
where $\delta_n$ satisfies
\begin{eqnarray}\nonumber
\displaystyle
\delta_n   \asymp  \left\{ \begin{array}{ll}\displaystyle
  \ n^{-1/2}, \ \   & \textrm{ $0<p \leq1/2 , $ }\\
      \ n^{- (3-4p)/2}, \ \   & \textrm{ $1/2 < p <3/4,  $ } \\
\displaystyle    ( \ln n)^{-1/2},  \ \   & \textrm{ $  p=3/4$.}
\end{array} \right.
\end{eqnarray}
Applying Lemma \ref{CMD31} to $\frac{\check{M}_{2n}}{\sqrt{v_n+na_{n}^{2}\check{\sigma}_n^2}}$, we obtain the following results for $n$ large enough.
\begin{description}
  \item[\textbf{[i]}]  If $p \in (0,  1/2]$,  then  for all $0\leq x =o(\sqrt{n}/Q_n),$
	\begin{equation}\label{fth1}
\Bigg| \ln \frac{\mathbf{P}\big(\check{M}_{2n} \geq x\sqrt{v_n  + na_n^2 \check{\sigma}_n^2 }\, \big)}{1-\Phi \left( x\right)} \Bigg| \leq   c   \Bigg(x^3\frac{Q_n}{\sqrt{n}}   +  (1+x) \frac{Q_n \ln Q_n }{\sqrt{n}}  \Bigg) .
\end{equation}
  \item[\textbf{[ii]}] If $p \in (1/2, 3/4)$, then   for all $0\leq x  =o(n^{(3-4p)/2}/Q_n),$
  \begin{equation}\nonumber
\Bigg| \ln\frac{\mathbf{P}\big(\check{M}_{2n} \geq x\sqrt{v_n  + na_n^2 \check{\sigma}_n^2 }\, \big)}{1-\Phi \left( x\right)} \Bigg| \leq     c  \Bigg(x^3\frac{Q_n }{    n^{ (3-4p) /2 } }  +  (1+x) \frac{Q_n  \ln n }{   n^{ (3-4p) /2 } } \Bigg)  .
\end{equation}
  \item[\textbf{[iii]}] If $ p=3/4$,  then   for all $0\leq x =o(\sqrt{\ln n}),$
	\begin{equation} \nonumber
\Bigg| \ln \frac{\mathbf{P}\big(\check{M}_{2n} \geq x\sqrt{v_n  + na_n^2 \check{\sigma}_n^2 }\, \big)}{1-\Phi \left( x\right)}  \Bigg| \leq   c \Bigg(\frac{x^3}{\sqrt{ \ln n}}  +  (1+x) \frac{ \ln \ln n  }{\sqrt{\ln n}}  \Bigg)  .
\end{equation}
\end{description}

We first consider  the case $p\in (0,\, 1/2]$.
As $\hat{M}_{2n}=\check{M}_{2n}+R_{2n}$, by \eqref{fth1}, we have for all $0\leq x=o(\sqrt{n}/Q_n)$,
\begin{equation}\label{fth11}
\Bigg| \ln \frac{\mathbf{P}\Big(\hat{M}_{2n}-R_{2n} \geq x\sqrt{v_n  + na_n^2 \check{\sigma}_n^2 }\, \Big)}{1-\Phi \left( x\right)} \Bigg| \leq   c  \Bigg(x^3\frac{Q_n}{\sqrt{n}}   +  (1+x) \frac{Q_n \ln Q_n }{\sqrt{n}}  \Bigg).
\end{equation}
As $ \mathbf{E}(Z_k-1)=0$, it holds
\begin{align}
|R_{2n}|&=\bigg|\sum_{k=1}^{n}a_n\alpha_kX_{\beta_k} \mathbf{E}[(Z_k-1)\textbf{I}_{\{|Z_k-1|>Q_n\}}]\bigg| \nonumber \\
&\leq a_n\sum_{k=1}^{n}\big| \mathbf{E}[(Z_k-1)\textbf{I}_{\{|Z_k-1|>Q_n\}}]\big|. \nonumber
\end{align}
By Markov's inequality, we have
\begin{equation}
| \mathbf{E}[(Z_k-1)\textbf{I}_{\{|Z_k-1|>Q_n\}}]|\leq \mathbf{E}[(Z_1-1)^2\exp{\{c|Z_1-1|^\alpha\}}] \exp\{  -c\,Q_{n}^{\alpha} \}.
\end{equation}
Then we  obtain
\begin{equation}\label{Rn}
|R_{2n}|\leq \varepsilon_n := na_n\exp\{-c\,Q_{n}^{\alpha}\} \mathbf{E}[(Z_1-1)^2\exp{\{c|Z_1-1|^\alpha\}}].
\end{equation}
For $\mathbf{P}\Big(\hat{M}_{2n}\geq x\sqrt{v_n  + na_n^2 \check{\sigma}_n^2}\Big) $, it holds for all $  \varepsilon_n/\sqrt{v_n  + na_n^2 \check{\sigma}_n^2 }\leq x
=o(\sqrt{n}/Q_n)$ and $n$ large enough,
\begin{align}\label{MDP+PP}
&\quad\left|\ln\frac{\mathbf{P}\Big(\hat{M}_{2n}\geq x\sqrt{v_n  + na_n^2 \check{\sigma}_n^2 }\Big)}{1-\Phi(x)}\right|\notag\\
&\leq\left|\ln\frac{\mathbf{P}\Big(\hat{M}_{2n}-R_{2n} \geq x\sqrt{v_n  + na_n^2 \check{\sigma}_n^2 }-\varepsilon_n\,\Big)}{1-\Phi\big(x-\varepsilon_n/\sqrt{v_n  + na_n^2 \check{\sigma}_n^2 }\big)}\right| + \left|\ln\frac{1-\Phi\big(x-\varepsilon_n/\sqrt{v_n  + na_n^2 \check{\sigma}_n^2 }\big)}{1-\Phi(x)}\right|  \nonumber \\
&=: I_1+I_2. \nonumber
\end{align}
By \eqref{fth11}, we have the following upper bound for $I_1$: For all $  \varepsilon_n/\sqrt{v_n  + na_n^2 \check{\sigma}_n^2 }\leq x
=o(\sqrt{n}/Q_n)$ and $n$ large enough,
\begin{eqnarray}\label{MDP+PP1}
I_1
&\leq& c\Bigg(\bigg(x- \frac{\varepsilon_n}{ \sqrt{v_n+na_n^2\check{\sigma}_n^2}}\bigg)^3\frac{Q_n}{\sqrt{n}}+\bigg(1+ \bigg(x-\frac{\varepsilon_n}{\sqrt{v_n + na_n^2\check{\sigma}_n^2}} \bigg)\bigg)\frac{Q_n\ln Q_n}{\sqrt{n}}  \Bigg)\notag\\
&\leq& c\Bigg(x^3\frac{Q_n}{\sqrt{n}}+(1+x)\frac{Q_n\ln Q_n}{\sqrt{n}}\Bigg) .
\end{eqnarray}
It is known that
\begin{equation}\label{phi}
\frac{1}{\sqrt{2\pi}(1 + x)}e^{-x^2/2} \leq 1 - \Phi(x) \leq \frac{1}{\sqrt{\pi}(1 + x)} e^{-x^2/2}, \quad x \geq 0
\end{equation}
(cf. \cite{FSCMD}), which implies that
\begin{equation}\label{phi^-1}
\frac{1+x}{\sqrt{2}}\leq\frac{\phi(x)}{1-\Phi(x)}\leq1+x,
\end{equation}
where $\phi(x)=(2\pi)^{-1/2}\exp\{-x^2/2\}$ is the density function of standard normal distribution.
By  \eqref{phi^-1},  we have for all $0\leq x=o(\sqrt{n}/Q_n)$ and $n$ large enough,
\begin{align}\label{phitail}
  I_2 \leq c (1+x)\varepsilon_n/\sqrt{v_n  + na_n^2 \check{\sigma}_n^2 }.
\end{align}
By \eqref{MDP+PP1}, \eqref{phitail} and the fact that $Q_n=n^{\beta}$, we deduce that for all $ \varepsilon_n/\sqrt{v_n  + na_n^2 \check{\sigma}_n^2 }\leq  x=o(\sqrt{n}/Q_n)$,
\begin{eqnarray}\label{fth12}
\Bigg| \ln \frac{\mathbf{P}\Big(\hat{M}_{2n} \geq x\sqrt{v_n  + na_n^2 \check{\sigma}_n^2 }\, \Big)}{1-\Phi \left( x\right)} \Bigg| &\leq&   c  \Bigg(x^3\frac{Q_n}{\sqrt{n}}   +  (1+x)\left( \frac{Q_n \ln Q_n }{\sqrt{n}}+  \frac{\varepsilon_n }{\sqrt{v_n  + na_n^2 \check{\sigma}_n^2 }} \right)  \Bigg)\nonumber\\
&\leq&   c  \Bigg(x^3\frac{Q_n}{\sqrt{n}}   +  (1+x) \frac{Q_n \ln Q_n }{\sqrt{n}}    \Bigg)\nonumber .
\end{eqnarray}
 When $\check{M}_{2n} $ is replaced by $-\check{M}_{2n} $,   (\ref{fth1}) holds also. Thus,
 following an argument similar to the proof of Corollary 2.3 in Fan and Shao \cite{FSCMD},
  we have the following Berry-Esseen bound:
\begin{equation*}
\sup_{x }
\Biggl|\mathbf{P}\left(\check{M}_{2n}\geq x\sqrt{v_n+na_n^2\check{\sigma}_n^2}\right)-\Big(1-\Phi(x)\Big)\Biggr|
\leq c\frac{Q_n\ln Q_n}{\sqrt{n}},
\end{equation*}
which implies that
\begin{align}\label{BEM}
 \sup_{x }\left|\mathbf{P}\left(\hat{M}_{2n}\geq x\sqrt{v_n+na_n^2\check{\sigma}_n^2}\right)-\Big(1-\Phi(x)\Big)\right| \leq c\frac{Q_n\ln Q_n}{\sqrt{n}}.
\end{align}
By \eqref{BEM} and \eqref{phi}, we have   for all   $0\leq x \leq \varepsilon_n/\sqrt{v_n  + na_n^2 \check{\sigma}_n^2 }$,
\begin{align*}
\left|\ln\frac{\mathbf{P}\left(\hat{M}_{2n}\geq x\sqrt{v_n+na_n^2\check{\sigma}_n^2}\right)}{1-\Phi(x)}\right|
\leq c\frac{Q_n\ln Q_n}{\sqrt{n}}.
\end{align*}
Therefore, it holds for all $0\leq x=o(\sqrt{n}/Q_n)$,
\begin{equation*}
\left|\ln\frac{\mathbf{P}\left(\hat{M}_{2n}\geq x\sqrt{v_n+na_n^2\check{\sigma}_n^2}\right)}{1-\Phi(x)}\right|
\leq c  \Bigg(x^3\frac{Q_n}{\sqrt{n}}   +  (1+x) \frac{Q_n \ln Q_n }{\sqrt{n}}    \Bigg).
\end{equation*}
By Lemma \ref{sigma2}, we obtain   for all $0\leq x=o(\sqrt{n}/Q_n)$,
\begin{equation}\label{fth13}
\Bigg| \ln \frac{\mathbf{P}\Big(\hat{M}_{2n} \geq x\sqrt{v_n  + na_n^2 \sigma^2 }\, \Big)}{1-\Phi \left( x\right)} \Bigg| \leq   c  \Bigg(x^3\frac{Q_n}{\sqrt{n}}   +  (1+x) \frac{Q_n \ln Q_n }{\sqrt{n}}   \Bigg) .
\end{equation}
Clearly, we have for all $0\leq x =o(\min{\{\sqrt{n}/Q_n, Q_{n}^{\alpha/2}\}})$ and $n$ large enough,
\begin{align}\label{M^hat}
& \left| \ln \frac{\mathbf{P}\Bigl(M_{2n}    \geq x\sqrt{v_n + na_n^2 \sigma^2}\Bigr)}{1-\Phi(x)} \right| \notag\\
& = \left| \ln \frac{\mathbf{P}\Bigl(\hat{M}_{2n}    \geq x\sqrt{v_n + na_n^2 \sigma^2}\Bigr)}{1-\Phi(x)} \cdot \frac{\mathbf{P}\Bigl( M_{2n}    \geq x\sqrt{v_n + na_n^2 \sigma^2}\Bigr)}{\mathbf{P}\Bigl(\hat{M}_{2n}    \geq x\sqrt{v_n + na_n^2 \sigma^2}\Bigr)}  \right| \notag\\
& \leq \left| \ln \frac{\mathbf{P}\Bigl(\hat{M}_{2n}    \geq x\sqrt{v_n + na_n^2 \sigma^2}\Bigr)}{1-\Phi(x)} \right|+\left|\ln \frac{\mathbf{P}\Bigl( M_{2n}\geq x\sqrt{v_n + na_n^2 \sigma^2}\Bigr)}{\mathbf{P}\Bigl(\hat{M}_{2n}    \geq x\sqrt{v_n + na_n^2 \sigma^2}\Bigr)} \right|.
\end{align}
Moreover, notice that
\begin{align*}
& \bigg| \mathbf{P}\left( M_{2n}>x\sqrt{v_n+na_n\sigma^2} \right) - \mathbf{P}\left( \hat{M}_{2n}>x\sqrt{v_n+na_n\sigma^2} \right) \bigg| \\
&\quad \ \ \ \ \leq\ \mathbf{P}\left( \bigcup_{k=1}^n \{ |Z_k-1| > Q_n \} \right) \leq \sum_{k=1}^n \mathbf{P}\left( |Z_k-1| > Q_n  \right) \\
&\quad \ \ \ \ \  \leq \ n Q_n^{-2} \exp{\{-c\, Q_n^\alpha\}}  \mathbf{E} [(Z_1-1)^2 \exp\{ c |Z_1-1|^\alpha \}].
\end{align*}
For the second term in the bound (\ref{M^hat}), we have for all $0\leq x =o(\min{\{\sqrt{n}/Q_n, Q_{n}^{\alpha/2}\}})$ and $n$ large enough,
\begin{align*}
\frac{\mathbf{P}\Bigl( M_{2n}\geq x\sqrt{v_n + na_n^2 \sigma^2}\Bigr)}{\mathbf{P}\Bigl(\hat{M}_{2n}\geq x\sqrt{v_n + na_n^2 \sigma^2}\Bigr)}
&\leq1+\frac{\left|\mathbf{P}\left( M_{2n}>x\sqrt{v_n+na_n\sigma^2}\right)-\mathbf{P}\left(\hat{M}_{2n}>x\sqrt{v_n+na_n\sigma^2} \right) \right|}{\mathbf{P}\Bigl(\hat{M}_{2n}\geq x\sqrt{v_n + na_n^2 \sigma^2}\Bigr)}\notag\\
&\leq1+\frac{C_1n Q_n^{-2} \exp{\{-c\, Q_n^\alpha\}}}{\mathbf{P}\Bigl(\hat{M}_{2n}\geq x\sqrt{v_n + na_n^2 \sigma^2}\Bigr)}\notag\\
&=1+\frac{C_1n Q_n^{-2} \exp{\{-c\, Q_n^\alpha\}}}{1-\Phi(x)}\cdot\frac{1-\Phi(x)}{\mathbf{P}\Bigl(\hat{M}_{2n}\geq x\sqrt{v_n + na_n^2 \sigma^2}\Bigr)}\notag\\
&\leq1+C_2(1+x)n Q_n^{-2} \exp{\left\{\frac{1}{2}x^2-c\, Q_n^\alpha\right\}},
\end{align*}
which implies the upper bound:
\begin{equation}\label{upperM^hat}
\ln \frac{\mathbf{P}\Bigl( M_{2n}\geq x\sqrt{v_n + na_n^2 \sigma^2}\Bigr)}{\mathbf{P}\Bigl(\hat{M}_{2n}\geq x\sqrt{v_n + na_n^2 \sigma^2}\Bigr)}
\leq C_2(1+x)n Q_n^{-2} \exp{\left\{\frac{1}{2}x^2-c\, Q_n^\alpha\right\}}.
\end{equation}
By a similar argument, it holds for all $0\leq x =o(\min{\{\sqrt{n}/Q_n, Q_{n}^{\alpha/2}\}})$ and $n$ large enough,
\begin{equation}\label{lnlower}
\frac{\mathbf{P}\Bigl( M_{2n}\geq x\sqrt{v_n + na_n^2 \sigma^2}\Bigr)}{\mathbf{P}\Bigl(\hat{M}_{2n}\geq x\sqrt{v_n + na_n^2 \sigma^2}\Bigr)}
\geq 1-C_3(1+x)n Q_n^{-2} \exp{\left\{\frac{1}{2}x^2-c\, Q_n^\alpha\right\}}.
\end{equation}
By \eqref{lnlower} and the fact that $\ln(1-y)\geq-2y$, where $y\in[0, 1/2)$, we have the lower bound: For all $0\leq x =o(\min{\{\sqrt{n}/Q_n, Q_{n}^{\alpha/2}\}})$ and $n$ large enough,
\begin{equation}\label{lowerM^hat}
\ln \frac{\mathbf{P}\Bigl( M_{2n}\geq x\sqrt{v_n + na_n^2 \sigma^2}\Bigr)}{\mathbf{P}\Bigl(\hat{M}_{2n}\geq x\sqrt{v_n + na_n^2 \sigma^2}\Bigr)}
\geq-C_4(1+x)n Q_n^{-2} \exp{\left\{\frac{1}{2}x^2-c\, Q_n^\alpha\right\}}.
\end{equation}
By \eqref{M^hat}, \eqref{upperM^hat} and \eqref{lowerM^hat}, we have for all $0\leq x =o(\min{\{\sqrt{n}/Q_n, Q_{n}^{\alpha/2}\}})$ and $n$ large enough,
\begin{align*}
\left| \ln \frac{\mathbf{P}\Bigl(M_{2n}    \geq x\sqrt{v_n + na_n^2 \sigma^2}\Bigr)}{1-\Phi(x)} \right|
&\leq c \Bigg( x^3 \frac{Q_n}{\sqrt{n}} + (1+x) \left( \frac{Q_n \ln Q_n }{\sqrt{n}} + nQ_n^{-2} \exp\left\{\frac{1}{2}x^2 - c_1\,Q_{n}^{\alpha}\right\} \right)   \Bigg)\\
& \leq c \Bigg( x^3 \frac{Q_n}{\sqrt{n}} + (1+x)  \frac{Q_n \ln Q_n }{\sqrt{n}}  \Bigg).
\end{align*}
Recall $M_{2n}=a_n S_n.$ Thus, we obtain for all $0\leq x =o(\min{\{\sqrt{n}/Q_n, Q_{n}^{\alpha/2}\}})$ and $n$ large enough,
\begin{align*}
 \left| \ln \frac{\mathbf{P}\Bigl(a_n S_n \geq x\sqrt{v_n + na_n^2 \sigma^2}\Bigr)}{1-\Phi(x)} \right|
   \leq  c \Bigg( x^3 \frac{Q_n}{\sqrt{n}} + (1+x)  \frac{Q_n \ln Q_n }{\sqrt{n}}   \Bigg),
\end{align*}
which gives the first desired inequality for the case $p\in (0, 1/2]$. The remaining cases can be proved in a similar way.

When  $a_nS_n$ is replaced by $-a_nS_n$,  the assertion of Theorem \ref{thm1} follows by a similar argument.
This completes the proof of Theorem \ref{thm1}.

\subsection{Proof of Corollary \ref{cor1.1}}
For the case $p\in (0,1/2]$, if $0\leq x= \min\{\sqrt{n}/Q_n, Q_n^{\alpha/2}\}$, $x^3Q_n/\sqrt{n}=o(1)$ as soon as $\beta=1/(3\alpha+2)$.
Taking $\beta=1/(3\alpha+2)$, from (\ref{fth111}), we obtain for all $0\leq x=o(n^{\alpha/(6\alpha+4)})$,
\begin{equation}
\frac{\mathbf{P}\big(a_nS_n>x\sqrt{v_n+na_n\sigma^2} \,\big)}{1-\Phi(x)}=1+o(1).
\end{equation}
When $a_nS_n$ is replaced by $-a_nS_n$,  (\ref{fth111}) holds also. Therefore,  the last equality
holds also when $a_nS_n$ is replaced by $-a_nS_n$.
For $p \in (1/2, 3/4),$ the assertion follows by (\ref{fth2}). \hfill\qed

\subsection{Proof of Corollary \ref{col0}}

 We only provide the proof for the case $p\in(0,\, 1/2]$, and the remaining proof follows by a similar argument.
  First, we give the proof for the upper bound: For  any Borel set $ B\subset\mathbb{R}$,
\begin{equation}\label{up1}
\limsup_{n\to\infty}\frac{1}{b_n^2}\ln\mathbf{P}\left(\frac{a_nS_n}{b_n\sqrt{v_n+na_n^2\sigma^2}\ }\in  B \right)\leq -\inf_{x\in \bar{ B }}\frac{x^2}{2}.
\end{equation}
When $ B =\emptyset$, equality \eqref{up1} holds trivially. For any given non-empty Borel set $ B \subset \mathbb{R}$, denote by $x_0=\inf_{x\in  B  }|x|$. Since $ B \subset\bar{ B }$, we have $x_0\geq \inf_{x\in \bar{ B }}|x|$ and thus it holds $x_0^2/2\geq \inf_{x\in \bar{ B }}x^2/2$.  For the case $p\in (0,1/2]$, if $0\leq x=\min\{\sqrt{n}/Q_n, Q_n^{\alpha/2}\}$, $xQ_n/\sqrt{n}=o(1)$ as soon as $\beta=1/( \alpha+2)$. By Theorem \ref{thm1} with $\beta=1/( \alpha+2)$, the following inequalities hold
\begin{align*}
&\quad\mathbf{P}\left(\frac{a_nS_n}{b_n\sqrt{v_n+na_n^2\sigma^2}}\in  B \right) \leq \mathbf{P}\left(\left|\frac{a_nS_n}{\sqrt{v_n+na_n^2\sigma^2}}\right|\geq b_nx_0\right) \\
&\ \ \ \ \ \ \ \ \ \ \ \ \ \  \leq 2\Big(1- \Phi(b_nx_0)\Big)\exp\left\{c\left(\frac{(b_nx_0)^3}{n^{ \alpha/(2\alpha+4) }}+(1+b_nx_0)\frac{\ln n}{n^{ \alpha/(2\alpha+4)}}\right)\right\}.
\end{align*}
 Since that  $b_n\to\infty$ and $b_n/n^{ \alpha/(2\alpha+4)}\to 0$ as $n\to\infty$, we have
\begin{equation}\label{lnexp}
\frac{1}{b_n^2}\ln\exp\left\{c \left(\frac{(b_nx_0)^3}{n^{ \alpha/(2\alpha+4)}}+(1+b_nx_0)\frac{\ln n}{n^{ \alpha/(2\alpha+4)}}\right)\right\}=o(1),\quad  \quad n\rightarrow\infty.
\end{equation}
By \eqref{phi} and \eqref{lnexp}, it holds
\[
\limsup_{n\to\infty}\frac{1}{b_n^2}\ln\mathbf{P}\left(\frac{a_nS_n}{b_n\sqrt{v_n+na_n^2\sigma^2}}\in  B \right)\leq -\frac{x_0^2}{2}\leq -\inf_{x\in \bar{ B }}\frac{x^2}{2}.
\]
This completes the proof of (\ref{up1}).

Next, we present a proof for  the lower bound
\begin{equation}\label{low1}
\liminf_{n\to\infty}\frac{1}{b_n^2}\ln\mathbf{P}\left(\frac{a_nS_n}{b_n\sqrt{v_n+na_n^2\sigma^2}}\in  B \right)\geq -\inf_{x\in B ^o}\frac{x^2}{2}.
\end{equation}
When  $ B ^o=\emptyset$, \eqref{low1} holds trivially. For any $ B ^o\neq\emptyset$, since $ B ^o$ is an open set,
 for any $\varepsilon_1>0$, there exists $x_1\in  B ^o$ such that
\begin{equation}\label{epsilon1}
0<\frac{x_1^2}{2}\leq \inf_{x\in  B ^o}\frac{x^2}{2}+\varepsilon_1.
\end{equation}
Without loss of generality, we assume that $x_1>0$. For $\varepsilon_2\in(0,x_1)$ sufficiently small, we have $[x_1-\varepsilon_2,x_1+\varepsilon_2)\subset  B ^o\subset  B $. Then it holds
\begin{align*}
&\quad\mathbf{P}\left(\frac{a_nS_n}{b_n\sqrt{v_n+na_n^2\sigma^2}}\in  B \right) \\
& \geq \mathbf{P}\left(\frac{a_nS_n}{\sqrt{v_n+na_n^2\sigma^2}}\in \left[b_n(x_1-\varepsilon_2),b_n(x_1+\varepsilon_2)\right)\right) \\
& = \mathbf{P}\left(\frac{a_nS_n}{\sqrt{v_n+na_n^2\sigma^2}}\geq b_n(x_1-\varepsilon_2)\right) - \mathbf{P}\left(\frac{a_nS_n}{\sqrt{v_n+na_n^2\sigma^2}}\geq b_n(x_1+\varepsilon_2)\right).
\end{align*}
By Theorem \ref{thm1} with $\beta=1/( \alpha+2)$   and \eqref{phi}, the ratio satisfies
\begin{align*}
&\quad \frac{\mathbf{P}\bigg(\dfrac{a_nS_n}{\sqrt{v_n+na_n^2\sigma^2}}\geq b_n(x_1+\varepsilon_2)\bigg)}{\mathbf{P}\bigg(\dfrac{a_nS_n}{\sqrt{v_n+na_n^2\sigma^2}}\geq b_n(x_1-\varepsilon_2)\bigg)}  \leq  \frac{ \displaystyle \ \ \frac{1}{\sqrt{\pi}\big(1+b_n(x_1+\varepsilon_2) \big)} e^{-b_n^2(x_1+\varepsilon_2)^2/2 }\Big(1+o(1) \Big)\ \ }{\displaystyle \ \ \frac{1}{\sqrt{2 \pi}\big(1+ b_n(x_1-\varepsilon_2)\big)}  e^{-b_n^2(x_1-\varepsilon_2)^2/2 }\Big(1+o(1) \Big)\ \ } \\
&\ \ \ \   \leq\sqrt{2} \Big(1+ b_n(x_1-\varepsilon_2) \Big) \exp\Big\{-2b_n^2x_1\varepsilon_2 \Big\}\Big(1+o(1) \Big) \to 0,\ \  \quad   n\to\infty,
\end{align*}
which means $\mathbf{P}\Big(\frac{a_nS_n}{\sqrt{v_n+na_n^2\sigma^2}}\geq b_n(x_1+\varepsilon_2)\Big)$ converges to $0$ faster than $\mathbf{P}\Big(\frac{a_nS_n}{\sqrt{v_n+na_n^2\sigma^2}}\geq b_n(x_1-\varepsilon_2)\Big)$.
Thus for $n$ large enough, it holds
\begin{align*}
&\quad\mathbf{P}\left(\frac{a_nS_n}{b_n\sqrt{v_n+na_n^2\sigma^2}}\in  B \right) \geq \frac{1}{2}\mathbf{P}\left(\frac{a_nS_n}{\sqrt{v_n+na_n^2\sigma^2}}\geq b_n(x_1-\varepsilon_2)\right) \\
&\ \ \ \ \ \ \  \ \ \ \ \ \ \ \ \ \ \ \ \  \ \ \ \ \geq \frac{1}{2\sqrt{2}(1+b_n(x_1-\varepsilon_2))}\exp\left\{-\frac{b_n^2(x_1-\varepsilon_2)^2}{2}+o(1)\right\}.
\end{align*}
Taking logarithms and scaling, we can deduce that
\[
\liminf_{n\to\infty}\frac{1}{b_n^2}\ln\mathbf{P}\left(\frac{a_nS_n}{b_n\sqrt{v_n+na_n^2\sigma^2}}\in  B \right)\geq -\frac{(x_1-\varepsilon_2)^2}{2}.
\]
Letting $\varepsilon_2\to 0$, by  \eqref{epsilon1}, we obtain
\[
\liminf_{n\to\infty}\frac{1}{b_n^2}\ln\mathbf{P}\left(\frac{a_nS_n}{b_n\sqrt{v_n+na_n^2\sigma^2}}\in  B \right)\geq -\frac{x_1^2}{2}\geq -\inf_{x\in  B ^o}\frac{x^2}{2}-\varepsilon_1.
\]
 Since $\varepsilon_1$ can be arbitrarily small, the lower bound \eqref{low1} follows.

 Combining \eqref{up1} and \eqref{low1} together, we obtain the desired inequalities  for the case $p\in(0,\,  1/2 ]$.

\section{Proofs of Theorem \ref{thm3} and its Corollary }
\setcounter{equation}{0}
\subsection{Proof of Theorem \ref{thm3}}
 We first consider the tail probability of $T_n/n$. By the inequalities  (3.13), (3.16) and (3.17) in \cite{HFDI}, we can obtain the following deviation inequality:
For all $p\in [0, 1)$, $x>0$ and $n$ large enough,
\begin{equation}\label{LDPT}
\mathbf{P}\left(\frac{T_n}{n}\geq x\right)\leq
\begin{cases}
\exp\{-c_1nx^2\},&\quad p\in[0, 3/4),\\
\exp\{-c_2nx^2/\ln n\},&\quad p=3/4,\\
\exp\{-c_2n^{4-4p} x^2 \},&\quad p\in(3/4, 1).\\
\end{cases}
\end{equation}
 Next, we consider the upper bound of $\ln\mathbf{P}(S_n\geq nx)$: For all $x>0$ and all $0<\varepsilon<x$,
\begin{align}
\ln \mathbf{P}(S_n\geq nx) &\leq \ln \Big[\mathbf{P}(N_n\geq n(x-\varepsilon))+\mathbf{P}(T_n\geq n\varepsilon) \Big] \nonumber  \\
                           &=\ln\left[1+\frac{\mathbf{P}(T_n\geq n\varepsilon)}{\mathbf{P}(N_n\geq n(x-\varepsilon))}\right]+\ln\Big[\mathbf{P}(N_n\geq n(x-\varepsilon))\Big].   \label{LDPdsT1}
\end{align}
When $0 \leq p < 1-\alpha/4$,  by Lemma \ref{LDPzk-1}  and \eqref{LDPT},   $\mathbf{P}(T_n\geq n\varepsilon)$  converges to $0$ faster than  $\mathbf{P}(N_n\geq n(x-\varepsilon))$
as $n\rightarrow \infty$.
Thus it holds
\begin{equation*}
\lim_{n\rightarrow\infty}\ln\left[1+\frac{\mathbf{P}(T_n\geq n\varepsilon)}{\mathbf{P}(N_n\geq n(x-\varepsilon))}\right]=0.
\end{equation*}
Therefore, the first term in the right hand side  of (\ref{LDPdsT1}) converges to 0 as $n\rightarrow\infty$. Therefore, we have
\begin{equation}\label{upSn}
\limsup_{n\rightarrow\infty}\frac{1}{n^\alpha}\ln\mathbf{P}\left(\frac{S_n}{n}\geq x\right)\leq \lim_{n\rightarrow\infty}\frac{1}{n^\alpha}\ln\mathbf{P}\bigg(\frac{N_n}{n}\geq x-\varepsilon \bigg)=-\beta(x-\varepsilon)^\alpha.
\end{equation}
Letting $\varepsilon\rightarrow 0$, we obtain   for all $x>0$,
\begin{equation}\label{vcgd1}
\limsup_{n\rightarrow\infty}\frac{1}{n^\alpha}\ln\mathbf{P}\left(\frac{S_n}{n}\geq x\right)\leq -\beta x^\alpha.
\end{equation}

For the lower bound, we have for any $x, \varepsilon>0$,
\begin{align*}
\mathbf{P}(S_n\geq nx) &\geq \mathbf{P}(N_n\geq n(x+\varepsilon), T_n\geq -n\varepsilon) \\
                       &= \mathbf{P}(N_n\geq n(x+\varepsilon))- \mathbf{P}(N_n\geq n(x+\varepsilon), T_n< -n\varepsilon) \\
                       &\geq \mathbf{P}(N_n\geq n(x+\varepsilon))- \mathbf{P}(T_n< -n\varepsilon).
\end{align*}
From the last line, we get for any $x, \varepsilon>0$,
\begin{equation*}
\frac{\mathbf{P}(S_n\geq nx)}{\mathbf{P}(N_n\geq n(x+\varepsilon))} \geq 1-\frac{\mathbf{P}(T_n< -n\varepsilon)}{\mathbf{P}(N_n\geq n(x+\varepsilon))}.
\end{equation*}
Denote by
\begin{equation*}
\mu_n=\frac{\mathbf{P}(T_n<- n\varepsilon)}{\mathbf{P}(N_n\geq n(x+\varepsilon))}.
\end{equation*}
Then it holds for any $x, \varepsilon>0$,
\begin{equation*}
\mathbf{P}(S_n\geq nx) \geq (1-\mu_n)\mathbf{P}(N_n\geq n(x+\varepsilon)).
\end{equation*}
When $0 \leq p < 1-\alpha/4$, again by Lemma \ref{LDPzk-1}  and \eqref{LDPT}, it is easy to know that $\mathbf{P}(T_n<- n\varepsilon)$  converges to $0$ faster than $\mathbf{P}(N_n\geq n(x+\varepsilon))$ as $n \rightarrow \infty$.
Thus, we have for $0 \leq p < 1-\alpha/4$,
\begin{equation*}
\lim_{n\rightarrow\infty}\mu_n=0.
\end{equation*}
Therefore, by Lemma \ref{LDPzk-1},  we have for any $x, \varepsilon>0$,
\begin{equation}\label{epsilon}
\liminf_{n\rightarrow\infty}\frac{1}{n^\alpha}\ln \mathbf{P}\left(\frac{S_n}{n}\geq x\right)\geq - \beta(x+\varepsilon)^\alpha.
\end{equation}
Letting $\varepsilon \rightarrow 0$, we obtain the following inequality: For any $x>0$,
\begin{equation}\label{vcgd2}
\liminf_{n\rightarrow\infty}\frac{1}{n^\alpha}\ln \mathbf{P}\left(\frac{S_n}{n}\geq x\right)\geq -\beta x^\alpha.
\end{equation}
Combining (\ref{vcgd1}) and (\ref{vcgd2}) together, we obtain the first desired equality.
The proof for $\lim_{n\rightarrow\infty}\frac{1}{n^\alpha}\ln\mathbf{P}(\frac{S_n}{n}\leq -x)$ is similar. This completes the proof of Theorem \ref{thm3}.
\subsection{Proof of Corollary \ref{col1}}
 First, we give a proof for the upper bound
\begin{equation}\label{upper}
\limsup_{n\rightarrow\infty}\frac{1}{n^\alpha}\ln\mathbf{P}\left(\frac{S_n}{n}\in  B \right)\leq-\inf_{x\in\bar{ B }}\beta |x|^\alpha.
\end{equation}
When $ B =\emptyset$, \eqref{upper} holds trivially. For any non-empty Borel set $ B \subset\mathbb{R}$, denote by $x_0=\inf_{x\in B }|x|$. Since $ B \subset\bar{ B }$, it is easy to know that $x_0\geq\inf_{x\in\bar{ B }}|x|$ and thus $-\beta x_0^\alpha\leq-\inf_{x\in\bar{ B }}\beta |x|^\alpha$. By Theorem \ref{thm3}, we have
\begin{equation}\label{up}
\limsup_{n\rightarrow\infty}\frac{1}{n^\alpha}\ln\mathbf{P}\left(\frac{S_n}{n}\in B \right)\leq \limsup_{n\rightarrow\infty}\frac{1}{n^\alpha}\ln\mathbf{P} \left(\left|\frac{S_n}{n}\right|\geq x_0\right)
= -\beta x_0^\alpha\leq-\inf_{x\in\bar{ B }}\beta |x|^\alpha ,
\end{equation}
which gives  (\ref{upper}).

Next, we give a proof for the lower bound
\begin{equation}\label{lower}
\liminf_{n\rightarrow\infty}\frac{1}{n^\alpha}\ln\mathbb{P}\left(\frac{S_n}{n}\in B \right)\geq-\inf_{x\in B ^o}\beta |x|^\alpha.
\end{equation}
Clearly, inequality \eqref{lower} holds trivially when $ B ^o=\emptyset$. Therefore, we only need to consider the case $ B ^o\neq\emptyset$. For any $\varepsilon>0,$ there exists a  $x_0 \in  B ^o,$ such that
\begin{equation}
\inf_{x\in B ^o}\beta |x|^\alpha+\varepsilon  \geq \beta |x_0|^\alpha >0 .
\end{equation}
Without loss of generality, we assume that $x_0>0$. For any $\delta > 0$ such that $[x_0 - \delta, x_0 + \delta) \subset  B ^o$ and $x_0-\delta>0$, we have

\begin{equation*}
\mathbf{P}\left( \frac{S_n}{n} \in  B  \right) \geq \mathbf{P}\left( \frac{S_n}{n} \in [x_0 - \delta, x_0 + \delta) \right).
\end{equation*}
Notice that
\[
\mathbf{P}\left( \frac{S_n}{n} \in [x_0 - \delta, x_0 + \delta) \right) = \mathbf{P}\left( \frac{S_n}{n} \geq x_0 - \delta \right) - \mathbf{P}\left( \frac{S_n}{n} \geq x_0 + \delta \right).
\]
By Theorem \ref{thm3}, we get
\[
\lim_{n \to \infty} \frac{1}{n^\alpha} \ln \mathbf{P}\left( \frac{S_n}{n} \geq x_0 - \delta \right) = -\beta (x_0 - \delta)^\alpha
\]
and
\[
\lim_{n \to \infty} \frac{1}{n^\alpha} \ln \mathbf{P}\left( \frac{S_n}{n} \geq x_0 + \delta \right) = -\beta (x_0 + \delta)^\alpha.
\]
Since $-\beta (x_0 + \delta)^\alpha < -\beta (x_0 - \delta)^\alpha$, the probability $\mathbf{P}\left( \frac{S_n}{n} \geq x_0 + \delta \right)$ decays at a faster exponential rate than $\mathbf{P}\left( \frac{S_n}{n} \geq x_0 - \delta \right)$. Therefore, for large $n$, we have

\[
\mathbf{P}\left( \frac{S_n}{n} \in [x_0 - \delta, x_0 + \delta) \right) \geq \frac{1}{2} \mathbf{P}\left( \frac{S_n}{n} \geq x_0 - \delta \right).
\]
Thus, it holds
\[
\liminf_{n \to \infty} \frac{1}{n^\alpha} \ln \mathbf{P}\left( \frac{S_n}{n} \in  B  \right) \geq \liminf_{n \to \infty} \frac{1}{n^\alpha} \ln \mathbf{P}\left( \frac{S_n}{n} \in [x_0 - \delta, x_0 + \delta) \right) = -\beta (x_0 - \delta)^\alpha.
\]
Since $\delta$ can be  arbitrarily small, we deduce that
 \[
\liminf_{n \to \infty} \frac{1}{n^\alpha} \ln \mathbf{P}\left( \frac{S_n}{n} \in  B  \right) \geq    -\beta  x_0 ^\alpha \geq -\inf_{x\in B ^o}\beta |x|^\alpha-\varepsilon .
\]
Now,  as $\varepsilon$  can be arbitrarily small,  we obtain
\begin{equation}\nonumber
\liminf_{n \to \infty} \frac{1}{n^\alpha} \ln \mathbf{P}\left( \frac{S_n}{n} \in  B ^o \right) \geq -\inf_{x \in  B ^o} \beta |x|^\alpha.
\end{equation}
which gives (\ref{lower}).

Combining \eqref{upper} and \eqref{lower} together, we obtain the desired inequalities. This completes the proof of Corollary \ref{col1}.

\section{Proof of Theorem \ref{thm5}}\label{bscsd6}
\setcounter{equation}{0}
We only give the proof for the case $\mathbf{P}(S_n\geq x_n)$, with $x_n\gg \sqrt{n\ln n},$ and the proof for the case $\mathbf{P}(S_n\leq -x_n)$ follows by  a similar argument.   Similar to the proof of Theorem \ref{thm3}, we begin the proof with $\mathbf{P}(N_n\geq x_n)$. It holds
\begin{align*}
\mathbf{P}(N_n\geq x_n) &= \sum_{k=1}^{n}\mathbf{P}\left(N_n\geq x_n, A_k>y_n, \max_{i\neq k}A_i\leq y_n\right)\\
                      &\quad \ + \mathbf{P}\bigg(N_n\geq x_n, \bigcup_{1\leq i<j\leq n}\{A_i>y_n, A_j>y_n\}\bigg)\\
                      &\quad \ + \mathbf{P}\left(N_n\geq x_n, \max_{1\leq k\leq n}A_k\leq y_n\right)\\
                      &:= P_{1n}+P_{2n}+P_{3n}.
\end{align*}
Next, we give some estimations for $P_{1n}, P_{2n}$ and $ P_{3n}$.
In the sequel, for simplicity of notation, we denote $y_n\equiv x_n/\lambda_n$, where $\lambda_n=\ln n$.\\

\textbf{(I)} \textbf{Control of $P_{1n}$}. It is easy to see that
\begin{align*}
\mathbf{P}(N_n\geq x_n, A_k>y_n, \max_{i\neq k}A_i\leq y_n) & =\mathbf{P}\left(N_n\geq x_n, \max_{i\neq k}A_i\leq y_n, Z_k-1>y_n, X_k=1\right)\\
                                                            & +\mathbf{P}\left(N_n\geq x_n, \max_{i\neq k}A_i\leq y_n, Z_k-1<-y_n, X_k=-1\right).
\end{align*}
For the first term in the last bound, we have the following decomposition:
\begin{align*}
& \mathbf{P}(N_n\geq x_n, \max_{i\neq k}A_i\leq y_n, Z_k-1>y_n, X_k=1)\\
& =\int_{-\infty}^{\infty}\mathbf{P}\left(Z_k-1>\max\{y_n, x_n-u\}\right) d\mathbf{P}\Big(N_n-A_k<u, \max_{i\neq k}A_i\leq y_n, X_k=1\Big)\\
& =\int_{-z_n}^{x_n'}+\int_{-\infty}^{-z_n}+\int_{x_n ' }^{\infty}:= I_1+I_2+I_3.
\end{align*}
We choose $x_n', z_n\rightarrow\infty$ such that
\begin{equation}\label{xn}
x_n/x_n ' \rightarrow\infty, \quad y_n/x_n ' \rightarrow0\quad \textrm{and} \quad x_n ' /\sqrt{n\ln n}\rightarrow\infty
\end{equation}
and
\begin{equation}\label{zn}
z_n/\sqrt{n}\rightarrow\infty, \quad x_n/z_n\rightarrow\infty.
\end{equation}
In the sequel, we give some estimations for the terms  $I_1$, $I_2$ and $I_3.$

\noindent\textit{a)} \textit{Control of $I_1$}.
By \eqref{xn}, \eqref{zn} and the condition \eqref{fcdnsa}, it is easy to see that \textcolor{black}{
\begin{equation*}
\frac{  \mathbf{P}(Z_1-1>x_n+z_n) }{\mathbf{P}(Z_1-1>x_n)}\rightarrow1
\end{equation*}
and
\begin{equation*}
\frac{  \mathbf{P}(Z_1-1>x_n-x_n ' )}{\mathbf{P}(Z_1-1>x_n)} \rightarrow1
\end{equation*}
as $n\rightarrow\infty.$}
Thus, it holds
\begin{equation*}
\lim_{n\rightarrow\infty}\sup_{-z_n\leq u\leq x_n ' } \bigg| \frac{\mathbf{P}(Z_1-1>x_n-u)}{\mathbf{P}(Z_1-1>x_n)}-1 \bigg|=0.
\end{equation*}
For sufficiently large $n$ and $-z_n\leq u\leq x_n ' $, we have
\begin{equation*}
\mathbf{P}(Z_1-1\geq \max\{y_n, x_n-u\})=\mathbf{P}(Z_1-1\geq x_n-u).
\end{equation*}
Therefore, it holds
\begin{equation}\label{I1.0}
I_1=\mathbf{P}(Z_1-1>x_n)\mathbf{P}\Big(-z_n<N_n-A_k<x_n^{'}, \max_{i\neq k}A_i\leq y_n, X_k=1\Big)\Big(1+o(1) \Big).
\end{equation}
For $\mathbf{P}\left(-z_n<N_n-A_k<x_n' , \max_{i\neq k}A_i\leq y_n, X_k=1\right)$, we have
\begin{align*}
&\mathbf{P}\left(-z_n<N_n-A_k<x_n', \max_{i\neq k}A_i\leq y_n, X_k=1\right)\\
&=\mathbf{P}\left(\max_{i\neq k}A_i\leq y_n, X_k=1\right) -\mathbf{P}\left(N_n-A_k\leq -z_n, \max_{i\neq k}A_i\leq y_n,X_k=1\right)\\
&\ \ \   -\mathbf{P}\left(N_n-A_k\geq x_n', \max_{i\neq k}A_i\leq y_n,X_k=1\right).
\end{align*}
For $\mathbf{P}\left(\max_{i\neq k}A_i\leq y_n, X_k=1\right)$, it holds
\begin{align*}
\frac{1}{2}\geq\mathbf{P}\left(\max_{i\neq k}A_i\leq y_n, X_k=1\right) &=\mathbf{P}\left(\max_{i\neq k}A_i\leq y_n\bigg| X_k=1\right)\mathbf{P}(X_k=1)\\
&=\frac{1}{2}\bigg[1-\mathbf{P}\bigg(\bigcup_{i\neq k}\{A_i>y_n\}\bigg|X_k=1\bigg)\ \bigg]\\
&\geq\frac{1}{2}\bigg[1-2\mathbf{P}\bigg(\bigcup_{i\neq k}\{A_i>y_n\}\bigg)\bigg].
\end{align*}
For $\mathbf{P}\big(\bigcup_{i\neq k}\{A_i>y_n\}\big)$, we can deduce that
\begin{eqnarray*}
\mathbf{P}\bigg(\bigcup_{i\neq k}\{A_i>y_n\}\bigg) &\leq& \sum_{i\neq k}\mathbf{P}(A_i>y_n)\leq \sum_{i\neq k}\mathbf{P}(|A_i|>y_n) \\
&\leq& n\mathbf{P}( |Z_1-1| >y_n)\\
&=& o(1).
\end{eqnarray*}
Thus, it holds
\begin{equation*}
\mathbf{P}\Big(\max_{i\neq k}A_i\leq y_n, X_k=1\Big)=\frac{1}{2}\Big(1+o(1) \Big).
\end{equation*}
It is easy to see that $$\mathbf{P}\left(N_n-A_k\leq -z_n, \max_{i\neq k}A_i\leq y_n,X_k=1\right)=o(1)$$ and  $$\mathbf{P}\left(N_n-A_k\geq x_n', \max_{i\neq k}A_i\leq y_n,X_k=1\right)=o(1)$$ as $n\rightarrow\infty.$
Therefore, from \eqref{I1.0}, the following equality holds
\begin{equation}\label{I1}
I_1=\frac{1}{2}\mathbf{P}(Z_1-1>x_n)\big(1+o(1) \big).
\end{equation}

\noindent\textit{b)}\textit{Control of $I_2$}.
By Chebyshev's inequality and (\ref{zn}), we get
\begin{equation*}
\mathbf{P}\left(N_n-A_k<-z_n, \max_{i\neq k}A_i\leq y_n\right)\leq \mathbf{P}(N_n-A_k<-z_n)\leq  \frac{(n-1)\sigma^2}{z_n^2} =o(1),
\end{equation*}
which implies
\begin{align}\label{I2}
I_2 &\leq \mathbf{P}(Z_k-1>x_n+z_n)\mathbf{P}\big(N_n-A_k<-z_n, \max_{i\neq k}A_i\leq y_n\big)\notag\\
    &\leq \mathbf{P}(Z_1-1>x_n)\mathbf{P}\big(N_n-A_k<-z_n, \max_{i\neq k}A_i\leq y_n\big)=o\Big(\mathbf{P}( Z_1-1 >x_n)\Big).
\end{align}

\noindent\textit{c)}\textit{Control of $I_3$}.
For $I_3$, we have \textcolor{black}{
\begin{align}\label{I3leq}
I_3 &\leq \int_{x_n'}^{\infty}\mathbf{P}(Z_k-1>y_n)d\mathbf{P}\left(N_n-A_k<u, \max_{i\neq k}A_i\leq y_n, X_k=1\right)\notag\\
    &\leq \mathbf{P}(N_n-A_k >x_n', \max_{i\neq k}A_i\leq y_n, X_k=1 )\mathbf{P}( Z_k-1 >y_n) \notag\\
    &\leq \mathbf{P}(N_n-A_k>x_n')\mathbf{P}( Z_k-1 >y_n)    \notag\\
    &\sim \mathbf{P}(N_n-A_k>x_n')\frac{\theta\mathcal{L}(y_n)}{y_n^{\gamma}}.
\end{align}
It can be easily deduced that $\mathbf{E} |Z_k-1|^{\gamma-\omega} <\infty$, where $\omega=(\gamma-2)/2$. By Corollary 2.5 in \cite{FGLDIMA}, it holds
\begin{equation}\label{NnI3}
\mathbf{P}(N_n-A_k>x_n')\leq \exp\left\{-\frac{x_n'^{2}}{2V^2}\right\}+\frac{B_{\gamma-\omega}}{x_n'^{\gamma-\omega}},
\end{equation}
where
\begin{equation}\label{Vc}
V^2=\frac{1}{4}(\gamma-\omega+2)^2e^{\gamma-\omega}n\sigma^2\quad and \quad B_{\gamma-\omega}=\Big(1+\frac{2}{\gamma-\omega} \Big)^{\gamma-\omega}nc_1.
\end{equation}
It is easy to see that
\begin{equation}\label{expxn}
\frac{\mathcal{L}(y_n)}{y_n^{\gamma}}\exp\left\{-\frac{x_n'^{2}}{2V^2}\right\}=o\Big(\mathbf{P}( Z_1-1 >x_n)\Big)
\end{equation}
and
\begin{equation}\label{powerxn}
\frac{\mathcal{L}(y_n)}{y_n^{\gamma}}\cdot\frac{B_{\gamma-\omega}}{x_n'^{\gamma-\omega}}
=O\Big(\frac{n\lambda_n^{\gamma}}{x_n'^{\gamma-\omega}}\cdot\frac{\mathcal{L}(x_n)}{x_n^\gamma} \Big)
=o\Big(\mathbf{P}( Z_1-1 >x_n) \Big).
\end{equation}
Therefore, it follows by \eqref{I3leq}, \eqref{NnI3}, \eqref{expxn} and \eqref{powerxn} that
\begin{equation}\label{I3}
I_3=o\Big(\mathbf{P}( Z_1-1 >x_n) \Big).
\end{equation} }
Combining the equalities  \eqref{I1}, \eqref{I2} and \eqref{I3} together, we obtain
\begin{equation*}
\sum_{k=1}^{n}\mathbf{P}\left(N_n\geq x_n, \max_{i\neq k}A_i\leq y_n, Z_k-1>y_n, X_k=1\right)=\frac{n}{2}\mathbf{P}(Z_1-1>x_n)\big(1+o(1)\big).
\end{equation*}
For $\mathbf{P}(N_n\geq x_n, \max_{i\neq k}A_i\leq y_n, Z_k-1<-y_n, X_k=-1)$, we have
\begin{align*}
&\quad \mathbf{P}(N_n\geq x_n, \max_{i\neq k}A_i\leq y_n, Z_k-1<-y_n, X_k=-1)\\
& =\int_{-\infty}^{\infty}\mathbf{P}\left(Z_k-1<\min\{-y_n, -x_n+u\}\right) d\mathbf{P}\big(N_n-A_k<u, \max_{i\neq k}A_i\leq y_n, X_k=-1\big).
\end{align*}
Similarly, we can deduce that
\begin{eqnarray}
&&\sum_{k=1}^{n}\mathbf{P}\Big(N_n\geq x_n, \max_{i\neq k}A_i\leq y_n, Z_k-1<-y_n, X_k=-1\Big) \nonumber \\
&& \ \ \ \ \ \ \ \ \ \ \ \ \ \ \ \ \ \  \ \ \ \ \ \ \ \ \  \ \ \ \ \ \ \ \ \  = \frac{n}{2}\mathbf{P}(Z_1-1<-x_n)\big(1+o(1)\big).\label{Xk-1}
\end{eqnarray}
Combining \eqref{I1} and \eqref{Xk-1} together, it holds
\begin{equation}\label{P1n}
P_{1n}=\frac{n}{2}\mathbf{P}(|Z_1-1|>x_n)\big(1+o(1) \big).
\end{equation}

\textbf{(II)} \textbf{Control of $P_{2n}$}.
It is easy to see that
\begin{equation*}
P_{2n} \leq \mathbf{P}\bigg(\bigcup_{1\leq i<j\leq n}\{A_i>y_n, A_j>y_n\}\bigg)\leq \sum_{1\leq i<j\leq n}\mathbf{P}(A_i>y_n, A_j>y_n).
\end{equation*}
For $\mathbf{P}(A_i>y_n, A_j>y_n)$, it holds
\begin{align*}
\mathbf{P}(A_i>y_n, A_j>y_n)\leq\mathbf{P}(|A_i|>y_n, |A_j|>y_n)=\mathbf{P}^2(|Z_1-1|>y_n).
\end{align*}
When $n$ is large enough, we have
\begin{align*}
P_{2n} & \leq n^2\mathbf{P}^2(|Z_1-1|>y_n)\sim n^2\mathcal{L}^2(y_n)y_n^{-2\gamma}=\mathcal{L}^2(y_n)n^2\lambda_n^{2\gamma}/x_n^{2\gamma}\\
                & \leq n^{1-\gamma/2}(n\mathcal{L}(x_n)/x_n^\gamma),
\end{align*}
which implies
\begin{equation}\label{P2n}
P_{2n}=o\big(n\mathbf{P}(|Z_1-1|>x_n)\big).
\end{equation}

\textbf{(III)} \textbf{Control of $P_{3n}$}.
By Theorem 2.4 in \cite{FGLDIMA} with $x=x_n$ and $y=y_n$, then the following inequality holds
\begin{align}
P_{3n} &= \mathbf{P}\left(N_n\geq x_n, \sum_{k=1}^{n}\mathbf{E}(A_k^2|\mathcal{F}_{k-1})=n\sigma^2, \sum_{k=1}^{n}\mathbf{E}(|A_k|^{\gamma-\omega}|\mathcal{F}_{k-1})= c_2n, \max_{1\leq k\leq n}A_k\leq y_n \right) \nonumber \\
   &\leq \exp\left\{-\frac{x_n^2}{2V^2}\right\}+\left(\frac{c_2n\lambda_n^{\gamma-\omega-1}}{x_n^{\gamma-\omega}}\right)^{\lambda_n},\label{ine615}
\end{align}
where $V^2$ and $\omega$ is defined by \eqref{NnI3} and \eqref{Vc}.
For the first term in the bound (\ref{ine615}), it is easy to see that
\begin{equation}\label{P3n1}
\exp\left\{-\frac{x_n^2}{2V^2}\right\}=o\Big(n\mathbf{P}(|Z_1-1|\geq x_n) \Big).
\end{equation}
And for the second term in the bound (\ref{ine615}), it holds
\begin{equation*}
\left(\frac{c_2n\lambda_n^{\gamma-\omega-1}}{x_n^{\gamma-\omega}}\right)^{\lambda_n}\Bigg/\frac{n\mathcal{L}(x_n)}{x_n^\gamma}
\leq\frac{c_2^{\lambda_n}x_n^{2\lambda_n}\lambda_n^{\gamma\lambda_n/2}}{\mathcal{L}(x_n)x_n^{(\gamma+2)\lambda_n/2}}
\leq \frac{c\lambda_n^{\gamma\lambda_n/2}}{\mathcal{L}(x_n)(x_n/c_3)^{(\gamma-2)\lambda_n/2}}=o(1),
\end{equation*}
which means
\begin{equation}\label{P3n2}
\left(\frac{c_2n\lambda_n^{\gamma-\omega-1}}{x_n^{\gamma-\omega}}\right)^{\lambda_n}=o\Big(n\mathbf{P}(|Z_1-1|\geq x_n)\Big).
\end{equation}
Combining \eqref{P3n1} and \eqref{P3n2} together, we get
\begin{equation}\label{P3n}
P_{3n}=o\Big(n\mathbf{P}(|Z_1-1|\geq x_n) \Big).
\end{equation}

Combining  \eqref{P1n}, \eqref{P2n} and \eqref{P3n} together, we deduce that
\begin{equation}\label{Nn1}
\mathbf{P}(N_n\geq x_n)=\frac{n}{2}\mathbf{P}(|Z_1-1|\geq x_n)\big(1+o(1) \big).
\end{equation}
By the condition \eqref{fcdnsa}, we have
\begin{equation*}
\mathbf{P}(|Z_1-1|\geq x_n)/\mathbf{P}(|Z_1|\geq x_n)\rightarrow1 \quad \textrm{as} \  n\rightarrow\infty,
\end{equation*}
which together with (\ref{Nn1}) implies that
\begin{equation}\label{Nn}
\mathbf{P}(N_n\geq x_n)=\frac{n}{2}\mathbf{P}(|Z_1|\geq x_n)\big(1+o(1) \big).
\end{equation}

For any $x_n\gg \sqrt{n\ln n}$, there exists a  function $\mathcal{G}$ such that $\mathcal{G}(n)\rightarrow\infty$ as $n\rightarrow\infty$ and $x_n=\sqrt{n\ln n} \mathcal{G}(n)$. Set $\Delta_n=x_n/\sqrt{\mathcal{G}(n)}=o(x_n)$. Clearly, it holds
\begin{equation}\label{Snup1}
\mathbf{P}(S_n\geq x_n)\leq \mathbf{P}(N_n\geq x_n-\Delta_n)+\mathbf{P}(T_n\geq\Delta_n).
\end{equation}
By (\ref{Nn1}) and (\ref{fcdnsa}), we can deduce  that
\begin{equation}\label{xn-deltan}
\mathbf{P}(N_n\geq x_n-\Delta_n)/\mathbf{P}(N_n\geq x_n)\rightarrow1 \ \ \ \ \textrm{as} \ n\rightarrow\infty.
\end{equation}
By (3.18) and (3.22) in \cite{HFDI}, we have the following  Azuma-Hoeffding type inequality for the ERW:
\begin{align}\nonumber
\mathbf{P}(T_n\geq \Delta_n)\leq \exp\left\{-c_5\Delta_n^2/n\right\}=\exp\left\{-c_5x_n^2/(n\mathcal{G}(n))\right\}, \quad p\in[0, 3/4).
\end{align}
By the last inequality, we can deduce that
\begin{align}\label{TnDeltan}
\mathbf{P}(T_n\geq \Delta_n)  =o\big(\mathbf{P}(N_n\geq x_n) \big).
\end{align}
Combining \eqref{Snup1}, \eqref{xn-deltan} and \eqref{TnDeltan}, we get
\begin{equation}\label{upper1}
\mathbf{P}(S_n\geq x_n)\leq \mathbf{P}(N_n\geq x_n)\big(1+o(1)\big).
\end{equation}

It is easy to see that
\begin{align}\label{Snlow2}
\mathbf{P}(S_n\geq x_n) &\geq \mathbf{P}(N_n\geq x_n+\Delta_n, T_n\geq-\Delta_n)\notag\\
                        &=\mathbf{P}(N_n\geq x_n+\Delta_n)-\mathbf{P}(N_n\geq x_n+\Delta_n, T_n<-\Delta_n)\notag\\
                        &\geq\mathbf{P}(N_n\geq x_n+\Delta_n)-\mathbf{P}(T_n<-\Delta_n).
\end{align}
Similarly, it can be deduced that
\begin{equation}\label{xn-deltan2}
\mathbf{P}(N_n\geq x_n+\Delta_n)/\mathbf{P}(N_n\geq x_n)\rightarrow1 \quad \textrm{as} \quad n\rightarrow\infty
\end{equation}
and
\begin{equation}\label{TnDeltan2}
\mathbf{P}(T_n<-\Delta_n)=o\big(\mathbf{P}(N_n\geq x_n)\big).
\end{equation}
Thus, by \eqref{Snlow2}, \eqref{xn-deltan2} and \eqref{TnDeltan2}, we have the following  lower bound
\begin{equation}\label{lower1}
\mathbf{P}(S_n\geq x_n)\geq \mathbf{P}(N_n\geq x_n)\big(1+o(1)\big).
\end{equation}

Combining  \eqref{upper1} and \eqref{lower1} together, we get
 \begin{equation} \nonumber
\mathbf{P}(S_n\geq x_n)=\mathbf{P}(N_n\geq x_n)\big(1+o(1)\big).
\end{equation}
The last equality  and \eqref{Nn} together implies the first desired equality.
 The second desired  equality follows by a similar argument. This completes the proof of Theorem \ref{thm5}.

\section*{Acknowledgements}
The authors are deeply indebted to the editor and two anonymous referees for their helpful
comments. This work has been partially supported  by  the National Natural Science Foundation of China (Grant No.\,12371155) and  Natural Science Foundation of Hebei Province (Grant No.\,A2025501005).


\begin{thebibliography}{00}{\footnotesize
\bibitem{ERWPTU} Baur E, Bertoin J. Elephant random walks and their connection to P\'{o}lya-type urns.  Phys Rev E, 2016, 94(5), 052134.
\bibitem{BMERW}Bercu  B. A martingale approach for the elephant random walk.  J  Phys  A: Math Theor, 2017, 51(1), 015201.

\bibitem{BMERWS}Bercu B. On the multidimensional elephant random walk with stops.  Stoch  Process  Their Appl, 2025,  189, 104692.
\bibitem{BLMERW}Bercu  B, Laulin  L. On the multi-dimensional elephant random walk.  J Stat Phys, 2019, 175: 1146--1163.
\bibitem{B21} Bertoin  J. Counting the zeros of an elephant random walk.  Trans  Amer  Math  Soc, 2022,  375: 5539--5560.
\bibitem{CGSCLT} Coletti  C F, Gava R, Sch{\"u}tz G M. Central limit theorem and related results for the elephant random walk.  J  Math  Phys, 2017, 58(5), 053303.
\bibitem{ASIP}Coletti C  F, Gava R,  Sch{\"u}tz G M. A strong invariance principle for the elephant random walk.  J. Stat. Mech.  2017, 2017(12), 123207.
\bibitem{DFHRCCLT}Dedecker  J, Fan X, Hu H,  Merlev\`{e}de F.  Rates of convergence in the central limit theorem for the elephant random walk with random step sizes.  J Stat Phys, 2023, 190(10), 154.
\bibitem{DDSLDP}Denisov  D, Dieker A B,  Shneer V. Large deviations for random walks under subexponentiality: the big-jump domain. Ann  Probab,  2008, 36: 1946--1941.
\bibitem{FGLDIMA}Fan  X, Grama I,  Liu Q. Deviation inequalities for martingales with applications.  J  Math Anal  Appl,  2017, 448(1): 538--566.
\bibitem{FSCMD}Fan  X, Shao Q M. Cram\'{e}r's moderate deviations for martingales with applications.  Ann  Inst  H Poincar\'{e} Probab  Statist, 2024. 60(3): 2046--2074.
\bibitem{GHLDP}Grama  I, Haeusler E. Large deviations for martingales via Cram\'{e}r's method.  Stochastic Process  Appl, 2000, 85(2): 279--293.
\bibitem{GSVERW}Gut  A,  Stadtm{\"u}ller U. Variations of the elephant random walk.  J Appl Probab, 2021, 58(3): 805--829.
\bibitem{HFDI} Hu H,  Fan X. Deviation inequalities for the elephant random walk with random step sizes.  J  Math  Phys, 2025, 66(3), 033301
\bibitem{LLAERW}Laulin  L. Introducing smooth amnesia to the memory of the elephant random walk.  Electron  Commun  Probab, 2022, 27: 1--12.
\bibitem{MWPLDPRVS}Mikosch  T, Wintenberger O. Precise large deviations for dependent regularly varying sequences.  Probab. Theory Related Fields, 2013, 156(3): 851--887.
\bibitem{NAVILTLDP} Nagaev  A V. Integral limit theorems for large deviations when Cram\'{e}r's condition is not fulfilled I, II.  Theory Probab  Appl, 1969,  14: 51--64 and 193--208.
\bibitem{NSVLDP} Nagaev  S  V. Large deviations of sums of independent random variables.  Ann  Probab, 1979, 7(5): 745--789.
\bibitem{N} Nakano  Y. Elephant random walk with polynomially decaying steps.  J  Stat  Phys, 2025, 192(6), 86.
\bibitem{QS} Qin  S. Recurrence and transience of multidimensional elephant random walks.  Ann  Probab,  2025, 53(3): 1049--1078.
\bibitem{schutz2004elephants} Sch{\"u}tz   G  M, Trimper S. Elephants can always remember: Exact long-range memory effects in a non-Markovian random walk.  Phys  Rev  E, 2004, 70(4), 045101.
}
\end{thebibliography}
\end{document}